\begin{document}
\newtheorem{problem}{Problem}
\newtheorem{theorem}{Theorem}
\newtheorem{lemma}[theorem]{Lemma}
\newtheorem{claim}[theorem]{Claim}
\newtheorem{cor}[theorem]{Corollary}
\newtheorem{prop}[theorem]{Proposition}
\newtheorem{definition}{Definition}
\newtheorem{question}[theorem]{Question}

\def\cA{{\mathcal A}}
\def\cB{{\mathcal B}}
\def\cC{{\mathcal C}}
\def\cD{{\mathcal D}}
\def\cE{{\mathcal E}}
\def\cF{{\mathcal F}}
\def\cG{{\mathcal G}}
\def\cH{{\mathcal H}}
\def\cI{{\mathcal I}}
\def\cJ{{\mathcal J}}
\def\cK{{\mathcal K}}
\def\cL{{\mathcal L}}
\def\cM{{\mathcal M}}
\def\cN{{\mathcal N}}
\def\cO{{\mathcal O}}
\def\cP{{\mathcal P}}
\def\cQ{{\mathcal Q}}
\def\cR{{\mathcal R}}
\def\cS{{\mathcal S}}
\def\cT{{\mathcal T}}
\def\cU{{\mathcal U}}
\def\cV{{\mathcal V}}
\def\cW{{\mathcal W}}
\def\cX{{\mathcal X}}
\def\cY{{\mathcal Y}}
\def\cZ{{\mathcal Z}}

\def\A{{\mathbb A}}
\def\B{{\mathbb B}}
\def\C{{\mathbb C}}
\def\D{{\mathbb D}}
\def\E{{\mathbb E}}
\def\F{{\mathbb F}}
\def\G{{\mathbb G}}
\def\I{{\mathbb I}}
\def\J{{\mathbb J}}
\def\K{{\mathbb K}}
\def\L{{\mathbb L}}
\def\M{{\mathbb M}}
\def\N{{\mathbb N}}
\def\O{{\mathbb O}}
\def\P{{\mathbb P}}
\def\Q{{\mathbb Q}}
\def\R{{\mathbb R}}
\def\S{{\mathbb S}}
\def\T{{\mathbb T}}
\def\U{{\mathbb U}}
\def\V{{\mathbb V}}
\def\W{{\mathbb W}}
\def\X{{\mathbb X}}
\def\Y{{\mathbb Y}}
\def\Z{{\mathbb Z}}

\def\ep{{\mathbf{e}}_p}
\def\em{{\mathbf{e}}_m}
\def\eq{{\mathbf{e}}_q}

\def\scr{\scriptstyle}
\def\\{\cr}
\def\({\left(}
\def\){\right)}
\def\[{\left[}
\def\]{\right]}
\def\<{\langle}
\def\>{\rangle}
\def\fl#1{\left\lfloor#1\right\rfloor}
\def\rf#1{\left\lceil#1\right\rceil}
\def\le{\leqslant}
\def\ge{\geqslant}
\def\eps{\varepsilon}
\def\mand{\qquad\mbox{and}\qquad}

\def\sssum{\mathop{\sum\ \sum\ \sum}}
\def\ssum{\mathop{\sum\, \sum}}
\def\ssumw{\mathop{\sum\qquad \sum}}

\def\vec#1{\mathbf{#1}}
\def\inv#1{\overline{#1}}
\def\num#1{\mathrm{num}(#1)}
\def\dist{\mathrm{dist}}

\def\fA{{\mathfrak A}}
\def\fB{{\mathfrak B}}
\def\fC{{\mathfrak C}}
\def\fU{{\mathfrak U}}
\def\fV{{\mathfrak V}}

\newcommand{\bflambda}{{\boldsymbol{\lambda}}}
\newcommand{\bfxi}{{\boldsymbol{\xi}}}
\newcommand{\bfrho}{{\boldsymbol{\rho}}}
\newcommand{\bfnu}{{\boldsymbol{\nu}}}

\def\GL{\mathrm{GL}}
\def\SL{\mathrm{SL}}

\def\Hba{\overline{\cH}_{a,m}}
\def\Hta{\widetilde{\cH}_{a,m}}
\def\Hb1{\overline{\cH}_{m}}
\def\Ht1{\widetilde{\cH}_{m}}

\def\flp#1{{\left\langle#1\right\rangle}_p}
\def\flm#1{{\left\langle#1\right\rangle}_m}
\def\dmod#1#2{\left\|#1\right\|_{#2}}
\def\dmodq#1{\left\|#1\right\|_q}

\def\Zm{\Z/m\Z}

\def\Err{{\mathbf{E}}}

\newcommand{\comm}[1]{\marginpar{%
\vskip-\baselineskip 
\raggedright\footnotesize
\itshape\hrule\smallskip#1\par\smallskip\hrule}}
\pagestyle{plain}
\def\xxx{\vskip5pt\hrule\vskip5pt}


\title{\bf Moments of character sums to composite modulus}

\date{\today}

 \author[B. Kerr] {Bryce Kerr}

\address{School of Mathematical Sciences, The University of New South Wales Canberra, Australia}
\email{b.kerr@adfa.edu.au}

\begin{abstract}
In this paper we consider the problem of estimating character sums to composite modulus and obtain some progress towards removing the cubefree restriction in the Burgess bound. Our approach is to estimate high order moments of character sums  in terms of solutions to congruences with Kloosterman fractions and we deal with this problem by extending some techniques of Bourgain, Garaev, Konyagin and Shparlinski and Bourgain and Garaev from the setting of prime modulus to composite modulus. As an application of our result we improve an estimate of Norton.
\end{abstract}

\maketitle


\maketitle
\section{Introduction}
Given an integer $q$ and a primitive character $\chi$ mod $q$ we  consider estimating the sums 
\begin{align}
\label{eq:charsum}
\sum_{M<n\le M+N}\chi(n).
\end{align}
The first result in this direction is  due to P\'{o}lya and Vinogradov and states that
\begin{align}
\label{eq:S111}
\sum_{M<n\le M+N}\chi(n) \ll q^{1/2}\log{q}.
\end{align}
The above bound is nontrivial provided $N\ge q^{1/2+o(1)}$ and a difficult problem is to estimate the sums~\eqref{eq:charsum} in the range $N\le q^{1/2-\delta}$. The first progress in this direction is due to Burgess~\cite{Bur0} and in a series of papers~\cite{Bur1,Bur12,Bur2,Bur3,Bur4} the work of Burgess culminated in the following estimate.
\begin{theorem}
\label{thm:Burgess}
Let $q$ be an integer and $\chi$ a primitive character mod $q$. Then we have 
\begin{align*}
\sum_{M<n\le M+N}\chi(n)\ll N^{1-1/r}q^{(r+1)/4r^2+o(1)},
\end{align*}
for any $r\le 3$ and any $r\ge 1$ provided $q$ is cubefree.
\end{theorem}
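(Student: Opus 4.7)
The plan is the classical Burgess amplification argument. Write $S := \sum_{M<n \le M+N}\chi(n)$, $I := (M, M+N]$, and let $\cP$ denote the set of primes in $(P/2, P]$ coprime to $q$. For each $a \in [1,A]$ and each $b \in \cP$, the shift of summation $n \mapsto n-ab$ gives $S = \sum_{n \in I}\chi(n+ab) + O(AP)$. Averaging over such $(a,b)$ and invoking the multiplicativity identity $\chi(n+ab) = \chi(b)\chi(\inv{b}n+a)$ yields the Burgess inequality
\begin{align*}
|S| \ll \frac{\log P}{AP}\sum_{n \in I}\left|\sum_{b \in \cP}\chi(b)V(\inv{b}n)\right| + AP,
\end{align*}
where $\inv{b}$ is the inverse of $b$ modulo $q$ and $V(t) := \sum_{1 \le a \le A}\chi(t+a)$.

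The next step is to apply H\"{o}lder's inequality with exponent $2r$ to the sum over $n$, expand, interchange summation, and change variables $t \equiv \inv{b}n \pmod{q}$. After controlling the number of representations $(n,b) \in I \times \cP$ giving rise to each residue class $t$, the problem reduces to estimating the $2r$-th moment
\begin{align*}
M_{2r} := \sum_{t \bmod q}|V(t)|^{2r} = \sum_{\vec{a},\vec{a}' \in [1,A]^r}\sum_{t \bmod q}\chi\!\left(\prod_{i=1}^r(t+a_i)\right)\overline{\chi\!\left(\prod_{i=1}^r(t+a_i')\right)}.
\end{align*}
Diagonal tuples, where $\vec{a}'$ is a permutation of $\vec{a}$, contribute $\ll r!\,A^r q$, and one seeks the square-root cancellation bound $q^{1/2+o(1)}$ for the inner character sum on each off-diagonal tuple.

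The main obstacle is this square-root saving. By the Chinese Remainder Theorem the inner sum factors over the prime-power divisors $p^e \| q$. For $e = 1$, the Weil bound supplies the desired $O(p^{1/2})$ saving whenever the rational function $\prod(t+a_i)/\prod(t+a_i')$ is not a $d$-th power in $\F_p(t)$, where $d$ is the order of $\chi \bmod p$; for $e = 2$, an elementary Taylor expansion writing $t = t_0 + ps$ reduces the sum to a complete one-variable Gauss or Kloosterman sum with the analogous saving. Together these cases handle all cubefree $q$ and give the theorem for every $r \ge 1$. When $p^3 \mid q$, however, the character sum modulo $p^e$ becomes degenerate on a much larger family of tuples (those on which the rational function $\prod(t+a_i)/\prod(t+a_i')$ is congruent to a $p$-th power to higher order), and the resulting loss can only be controlled when $r \le 3$; this yields the general-modulus branch of the theorem. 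Combining the moment bound with the Burgess inequality and optimising $A$ and $P$ subject to $AP \ll N$ produces the claimed estimate $N^{1-1/r}q^{(r+1)/4r^2 + o(1)}$.
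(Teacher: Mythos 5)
Your outline is the classical Burgess amplification route (shift by $ab$, H\"{o}lder, completion, reduction to the $2r$-th moment of $V(t)=\sum_{a\le A}\chi(t+a)$, then optimise $A,P$), which is indeed how this theorem is proved; the paper itself only cites Burgess for it. But the statement you are proving lives entirely in the moment estimate $\sum_{t\bmod q}|V(t)|^{2r}\ll qA^r+q^{1/2+o(1)}A^{2r}$ for composite $q$, and that is exactly where your sketch stops being a proof. Even in the cubefree case there are two points you gloss over: the off-diagonal local sums are not uniformly $O(q^{1/2+o(1)})$ --- they carry gcd factors $(A_j(v),q)^{1/2}$ (Lemma~7 of Burgess's 1962 paper, quoted here as Lemma~\ref{lem:squarefree}) which must be summed over divisors of $q$ against the count of tuples realising each gcd; and for $p^2\,\|\,q$ the Taylor expansion $t=t_0+ps$ does not produce a ``Gauss or Kloosterman sum'': the sum over $s$ simply enforces $f'_{v_1}(t_0)f_{v_2}(t_0)-f_{v_1}(t_0)f'_{v_2}(t_0)\equiv 0 \bmod p$ with $(f_{v_1}(t_0)f_{v_2}(t_0),p)=1$, and what one actually needs is the root-count bound $N_v(p)\ll (A_j(v),p)$ (Lemma~\ref{lem:square}) so that the $p^2$-part contributes gcd factors compatible with the squarefree part. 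Also, the tuples for which $\prod(t+a_i)/\prod(t+a_i')$ is a $d$-th power but which are not permutations must be counted and absorbed into the diagonal; this is standard but cannot be omitted.

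The genuine gap is the general-modulus branch $r\le 3$. You dismiss it with ``the resulting loss can only be controlled when $r\le 3$,'' but that sentence is the theorem. For $p^{\alpha}\mid q$ with $\alpha\ge 3$ the local sum reduces (as described in the paper's introduction) to counting solutions of $F_v'(\lambda)\equiv 0 \bmod p^{\lceil\alpha/2\rceil}$ with $F_v(\lambda)$ a unit; for $r=2$ this is a quadratic congruence handled by explicit discriminant computations, while for $r=3$ it required Burgess's separate 1986 papers~\cite{Bur3,Bur4}, more than twenty years after the $r=2$ case, via an intricate analysis of the singular solutions through systems of congruences and elimination of variables. Nothing in your proposal supplies, or even indicates, an argument for why the degenerate tuples are controllable precisely when $r\le 3$ and how the resulting counts recombine under the Chinese Remainder Theorem with the squarefree and square parts. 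As written, the cubefree case is a correct but incomplete sketch, and the ``any $r\le 3$ for arbitrary $q$'' case is asserted rather than proved.
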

A well known conjecture states
\begin{align*}
\sum_{M<n\le M+N}\chi(n)\ll N^{1/2}q^{o(1)},
\end{align*}
and a longstanding problem is to improve on Theorem~\ref{thm:Burgess} quantitatively and in the range of parameters for which the bound is nontrivial. There has been some progress on this problem for sets of moduli with some arithmetic structure although making progress for general $q$ remains open. See~\cite{Chang,Gold,GR,Irv} for improvements to smooth modulus with origins in Heath-Brown's $q$-analogue of Weyl differencing~\cite{HB1} and~\cite{BS,Gal,Iw,Mil} for improvements to powerful modulus with the first results in this direction due to Postnikov~\cite{Pos1,Pos2}. One of the important consequences of the case $r=2$ in Theorem~\ref{thm:Burgess} is the subconvexity estimate 
\begin{align*}
L\left(\frac{1}{2},\chi \right)\ll q^{3/16+o(1)},
\end{align*}
which has recently been improved by Petrow and Young~\cite{PY} for cubefree modulus, extending earlier work of Conrey and Iwaniec~\cite{CI}.
\newline

 The restriction to cubefree modulus arises in many problems when applying the amplification method to estimate exponential sums. In the setting of Theorem~\ref{thm:Burgess} removing this restriction would allow the estimation for smaller ranges of the parameter $N$ and have applications to analytic properties of Dirichlet $L$-functions closer to the line $\Re{s}=1$. The main difficulty in achieving this  lies in the estimation of complete sums modulo prime powers. An important stage in Burgess' argument is the reduction of estimating the sums~\eqref{eq:charsum}  to the moments 
\begin{align}
\label{eq:mvde}
\sum_{\lambda=1}^{q}\left|\sum_{1\le v \le V}\chi(\lambda+v)\right|^{2r}.
\end{align}
These moments were first considered by Davenport and Erd\H{o}s~\cite{DE} for prime modulus $q$ who  
appealed to some earlier work of Davenport~\cite{Dav}. This became obsolete after Weil~\cite{Weil} whose estimates  lead to the bound 
\begin{align}
\label{eq:mvweil}
\sum_{\lambda=1}^{q}\left|\sum_{1\le v \le V}\chi(\lambda+v)\right|^{2r}\ll qV^r+q^{1/2}V^{2r}.
\end{align}
Extending the estimate~\eqref{eq:mvweil} to arbitrary composite modulus is the main obstacle in removing the cubefree restriction in Theorem~\ref{thm:Burgess}. Supposing that $q=p^{\alpha}$ is a prime power and considering~\eqref{eq:mvde}, expanding and interchanging summation gives
\begin{align}
\label{eq:mvc1}
\sum_{\lambda=1}^{p^{\alpha}}\left|\sum_{1\le v \le V}\chi(\lambda+v)\right|^{2r}\le \sum_{1\le v \le V}\left|\sum_{\lambda=1}^{p^{\alpha}}\chi\left(F_v(\lambda) \right) \right|,
\end{align}
where 
\begin{align}
\label{eq:Fvdef}
F_v(\lambda)=\frac{(v_1+\lambda)\dots (v_r+\lambda)}{(v_{r+1}+\lambda) \dots (v_{2r}+\lambda)}.
\end{align}
If $\alpha=1$  one may partition summation over $v$ into suitable sets and appeal to the Weil bound
\begin{align*}
\sum_{\lambda=1}^{p^{\alpha}}\chi\left(F_v(\lambda) \right)\ll p^{1/2},
\end{align*}
to get~\eqref{eq:mvweil}. Combining these ideas with the Chinese remainder theorem and the argument of Burgess gives Theorem~\ref{thm:Burgess}  for squarefree modulus. When $\alpha>1$, considering the sums 
\begin{align}
\label{eq:Fcomp}
\sum_{\lambda=1}^{p^{\alpha}}\chi\left(F_v(\lambda) \right),
\end{align}
one may partition $\lambda$ into residue classes mod $p^{\alpha/2}$ with the result of transforming into summation over additive characters mod $p^{\alpha/2}$, see~\cite[Chapter~12]{IwKo} for some general results related to this technique. This reduces estimating~\eqref{eq:Fcomp} to counting the number of solutions to the congruence 
\begin{align}
\label{eq:Fvpoly}
F'_v(\lambda)\equiv 0 \mod{p^{\alpha/2}}, \quad 0\le \lambda<p^{\alpha/2}.
\end{align}
If $\alpha=2$ then this is a polynomial congruence mod $p$ for which there are $O(1)$ solutions and allows the extension of~\eqref{eq:mvweil} to cubefree modulus. For arbitrary $\alpha$ we note that if $r=2$ then~\eqref{eq:Fvpoly} is a quadratic congruence whose number of solutions may be estimated via calculations with the discriminant and gives ~\eqref{eq:mvweil} for $r=2$ and arbitrary modulus. The case of $r=3$ is much more difficult and was achieved by Burgess~\cite{Bur3,Bur4} more than 20 years after the $r=2$ case. Since the work of Burgess there has been little progress on extending the estimate~\eqref{eq:mvweil} apart from some isolated values of $r$ and $\alpha$, see~\cite{Bur5,Bur6,Bur7}. These approaches are based on interpreting the average number of singular solutions to~\eqref{eq:Fvpoly}  as systems of congruences modulo divisors of $p^{\alpha/2}$ which are dealt with via a successive elimination of variables and is not clear how to generalize to larger values of $r$ and $\alpha$. In this paper we introduce an approach which allows a systematic study of the mean values~\eqref{eq:mvde} for arbitrary integers $q,r$ and in particular give the first nontrivial estimate of the moments~\eqref{eq:mvde} in the cubefull aspect for any $r\ge 4$.
\newline

 Our first step is to take advantage of summation over $v$ to reduce estimating~\eqref{eq:Fvpoly} to counting solutions to congruences with Kloosterman fractions. For
integers $q,\lambda,V,r$ we let $K_{r,q}(\lambda,V)$ count the number of solutions to the congruence 
\begin{align*}
\frac{1}{\lambda+v_1}+\dots+\frac{1}{\lambda+v_r}\equiv \frac{1}{\lambda+v_{r+1}}+\dots+\frac{1}{\lambda+v_{2r}} \mod{q},
\end{align*}
with variables satifying 
\begin{align*}
|v_1|,\dots,|v_{2r}|\le V,
\end{align*}
and note the reduction to $K_{r,q}(\lambda,V)$ can be seen by using~\eqref{eq:mvc1},~\eqref{eq:Fcomp},~\eqref{eq:Fvpoly} and interchanging summation. We carry out the details of this in Section~\ref{sec:2}.  The problem of estimating $K_{r,q}(\lambda,V)$  first appears to be considered by Heath-Brown~\cite{HB} in the case $r=2, \lambda=0$ who obtained the estimate 
\begin{align}
\label{eq:HB}
K_{2,q}(0,V)\ll \left(\frac{V^{7/2}}{q^{1/2}}+V^2 \right)q^{o(1)}.
\end{align}
The case of $r\ge 3$ and $\lambda=0$ was considered by Karatsuba~\cite{Kar} who obtained sharp estimates with  restricted ranges of the parameter $V$. Bourgain and Garaev~\cite{BG,BG1} used the Geometry of numbers to remove some restrictions in Karatsuba's estimate to obtain
\begin{align}
\label{eq:BGkl}
K_{r,q}(0,V)\ll \left(\frac{V^{3r-1}}{q}+V^{r}\right)q^{o(1)}.
\end{align}
We note that both~\eqref{eq:HB} and~\eqref{eq:BGkl} fall short of the expected bound
\begin{align}
\label{eq:expected}
K_{r,q}(\lambda,V)\ll \left(\frac{V^{2r}}{q}+V^{r}\right)q^{o(1)}.
\end{align}
The case of arbitrary $\lambda$ is much less understood. Bourgain and Garaev~\cite{BG} have shown for $q$ prime that
\begin{align}
K_{r,q}(\lambda,V)\ll \left(\frac{V^{2r}}{q^{1/4r}}+V^{r}\right)q^{o(1)}.
\end{align}
The argument of Bourgain and Garaev does not directly apply to composite modulus and builds on a strategy of Bourgain, Garaev, Konyagin and Shparlinski~\cite{BGKS} who in a series of papers~\cite{BGKS0,BGKS,BGKS1} obtain some estimates and applications  for counting the number of solutions to the congruence 
\begin{align}
\label{eq:multcong}
(v_1+\lambda)\dots (v_r+\lambda)\equiv (v_{r+1}+\lambda)\dots (v_{2r}+\lambda) \mod{q},
\end{align}
with variables satisfying 
\begin{align}
\label{eq:Vvarvar}
1\le v_1,\dots, v_{2r}\le V.
\end{align}
 We give a brief overview of the strategy of Bourgain, Garaev, Konyagin and Shparlinski~\cite{BGKS} and indicate the ideas required to extend from prime to arbitrary modulus, the details of which are given in Section~\ref{sec:3}.
\newline

 Considering solutions to the congruence~\eqref{eq:multcong}, after removing diagonal terms we are left to consider solutions such that the polynomial
\begin{align*}
P_v(X)=\prod_{i=1}^{r}(X+v_i)-\prod_{i=r+1}^{2r}(X+v_i),
\end{align*}
is not constant. Since $P_v(\lambda)\equiv 0 \mod{q},$ each solution gives us a point of the lattice 
\begin{align*}
\cL=\{ (x_0,\dots,x_{2r-1})\in \Z^{2r-1} \ : \ x_0+x_1\lambda+\dots+x_{2r-1}\lambda^{2r-1}\equiv 0 \mod{q}\},
\end{align*} 
and hence a large number of solutions allows us to construct a small lattice point. From this we obtain a polynomial $Q$ with small coefficients and $Q(\lambda)\equiv 0 \mod{q}$. Since $q$ is prime and each $P_v$ and $Q$ have a common root over $\F_q$, their resultant must vanish
\begin{align*}
\text{Res}(Q,P_v)\equiv 0 \mod{q}.
\end{align*}
If $V$ is sufficiently small then $\text{Res}(Q,P_v)<q$ and hence 
\begin{align*}
\text{Res}(Q,P_v)=0.
\end{align*}
This implies that for some root $\sigma$ of $Q$
$$(v_1+\sigma)\dots (v_r+\sigma)= (v_{r+1}+\sigma)\dots (v_{2r}+\sigma),$$
and reduces the problem to counting divisors in some ring of algebraic integers. The same strategy was applied by Bourgain and Garaev~\cite{BG} to $K_{r,q}(\lambda,V)$ who required an estimate for the number of solutions to the equation 
\begin{align*}
\frac{1}{v_1+\sigma}+\dots+\frac{1}{v_r+\sigma}=\frac{1}{v_{r+1}+\sigma}+\dots+\frac{1}{v_{2r}+\sigma},
\end{align*}
and were able to detect square root cancellation, see~\cite[Lemma~6]{BG}. The main obstacle in extending this argument to composite modulus is the fact that over a field the resultant of two polynomials vanishes if and only if they have a common root and may not be true for residue rings. We get around this issue by showing some calculations with the resultant also hold for residue rings provided our root is coprime to the modulus then use the fact that any short interval $\cI$ contains an integer coprime to $q$. This allows for a reduction of estimating $K_{r,q}(\lambda,V)$ to the case $(\lambda,q)=1$.
\newline

The main obstacle preventing further progress through this method is obtaining a sharp bound for $K_{r,q}(\lambda,V)$ uniformly over $q,\lambda,V$ and note the conjectured estimate~\eqref{eq:expected} implies~\eqref{eq:mvweil} for any integer $r$ provided $q$ is a prime power. For the case of arbitrary $q$ one would need an estimate of the strength~\eqref{eq:expected} when the variables run through intervals of differing side length owing to the use of the Chinese remainder theorem which interferes with lengths of summation when performing the reduction to $K_{r,q}(\lambda,V)$. 
One may always apply H\"{o}lder's inequality to reduce to equal side lengths although this is not sufficient for applications to a sharp bound as it loses information about domination of terms $V^{2r}/q$ and $V^{r}$.

\subsection*{Acknowledgement:} The author would like to thank Igor Shparlinski and Tim Trudgian for useful comments.
\section{Main results}
\begin{theorem}
\label{thm:main1}
Let $q$ be an integer with decomposition
$$q=q_1sc,$$
with $q_1$ squarefree, $s$ a square with $s^{1/2}$ squarefree and $c$ cubefull. For any primitive character $\chi$ mod $q$ and integer $V$ we have 
\begin{align*}
&\sum_{\lambda=1}^{q}\left|\sum_{1\le v \le V}\chi(\lambda+v) \right|^{2r}\ll qV^{r}
\\&+q^{1/2+o(1)}s^{1/4}c^{1/2}V^{r+1/2}+q^{1/2+o(1)}c^{1/2-1/16r(r-1)}V^{2r}.
\end{align*}
\end{theorem}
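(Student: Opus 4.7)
My plan is to begin from~\eqref{eq:mvc1}, factor the inner complete sum modulo $q=q_1sc$ via the Chinese remainder theorem, and treat each of the three factors by a different technique. Writing $\lambda\equiv(\lambda_1,\lambda_s,\lambda_c)$ modulo $(q_1,s,c)$ and decomposing $\chi$ accordingly, we obtain
\begin{align*}
\sum_{\lambda=1}^{q}\chi(F_v(\lambda))=A(v)\,B(v)\,C(v),
\end{align*}
where $A,B,C$ are the corresponding complete character sums modulo $q_1,s,c$. Applying H\"older's inequality in $v$ reduces the proof to bounding the moments $\sum_v|A(v)|^\kappa$, $\sum_v|B(v)|^\kappa$, $\sum_v|C(v)|^\kappa$ separately, and optimising the choice of $\kappa$ at the end.

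For the squarefree modulus $q_1$, iterated CRT over prime divisors together with the Weil bound give $|A(v)|\ll q_1^{1/2+o(1)}$ for non-degenerate $v$, producing a Weil-type moment. For the square part $s=s_0^2$ with $s_0$ squarefree, the sum modulo each $p^2\|s$ is treated by writing $\lambda=\mu+p\nu$ and extracting stationary phase from the additive character produced by $\chi_s$: the stationary condition is the polynomial congruence $F'_v(\mu)/F_v(\mu)\equiv 0\pmod{p}$, which has $O(1)$ solutions, so one obtains a square-root bound $|B(v)|\ll s^{1/2+o(1)}$ for non-degenerate $v$. Combined with the cubefull estimate below via H\"older, these two contributions account for the first two terms of the theorem.

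The essential content lies in the cubefull factor $c$. For each prime power $p^{\alpha}\|c$ with $\alpha\ge 3$, the substitution $\lambda=\mu+p^{\lceil\alpha/2\rceil}\nu$ reduces $\sum_\lambda\chi_c(F_v(\lambda))$ to an additive character sum whose stationary points are governed by the congruence~\eqref{eq:Fvpoly}. Using the logarithmic derivative identity
\begin{align*}
\frac{F'_v(\lambda)}{F_v(\lambda)}=\sum_{i=1}^{r}\frac{1}{\lambda+v_i}-\sum_{i=r+1}^{2r}\frac{1}{\lambda+v_i},
\end{align*}
summing over $v$, and collecting over $\lambda$ modulo an appropriate divisor $m$ of $c$ with $m\sim c^{1/2}$, the cubefull contribution is controlled by $K_{r,m}(\lambda,V)$ uniformly in $\lambda$.

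The main obstacle is then proving a nontrivial bound for $K_{r,m}(\lambda,V)$ that is uniform in $\lambda$ and valid for composite $m$. I would adapt the Bourgain--Garaev--Konyagin--Shparlinski strategy: a large number of solutions forces a short vector in the lattice
\begin{align*}
\cL=\{x\in\Z^{2r-1}:x_0+x_1\lambda+\dots+x_{2r-1}\lambda^{2r-1}\equiv 0\pmod{m}\},
\end{align*}
yielding a polynomial $Q$ of small height with $Q(\lambda)\equiv 0\pmod{m}$. Over $\F_p$ one concludes by vanishing of the resultant $\mathrm{Res}(Q,P_v)$ in $\Z$ (since its size is below $m$), but for composite $m$ the resultant-vanishing criterion for a common root fails. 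The fix, foreshadowed in the introduction, is to first reduce to the coprime case $(\lambda,m)=1$ by translating $\lambda$ to a nearby unit in $\Z/m\Z$ (any interval of length exceeding $m^{o(1)}$ contains such a unit), and then to show that the classical resultant identity still forces $\mathrm{Res}(Q,P_v)=0$ in $\Z$ under this coprimality hypothesis. A divisor-counting argument in $\Z[\sigma]$ for roots $\sigma$ of $Q$ then yields the saving $c^{-1/(16r(r-1))}$ on the trivial bound and accounts for the final term of the theorem. The most delicate step is the resultant analysis over a residue ring and the careful tracking of the loss through the lattice reduction.
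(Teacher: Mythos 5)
Your skeleton matches the paper's in outline (CRT factorization, Burgess-style reduction of the prime-power sums to the stationary congruence~\eqref{eq:Fvpoly}, passage to Kloosterman-fraction congruences, and the composite-modulus resultant argument after shifting $\lambda$ to a nearby unit), but two steps as you describe them have genuine gaps. First, separating the three CRT factors $A,B,C$ by H\"older in $v$ and bounding their moments independently discards exactly the correlation that produces the theorem's terms. In the actual argument one bounds each factor pointwise in $v$ — Weil gives $|A(v)|\ll q_1^{1/2+o(1)}(A_1(v),q_1)^{1/2}$ and the square part gives $|B(v)|\ll s^{1/2}(A_1(v),s^{1/2})$, not $O(1)$ stationary points, since the stationary congruence mod $p$ degenerates precisely when $p\mid A_1(v)$ — and then partitions $v$ according to these gcds. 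The divisibility $t_js_j\mid v_1-v_j$ forces the remaining variables into arithmetic progressions, so the Kloosterman counts that arise are $K_{r,m}(\lambda,V/t_js_j)$ over \emph{shortened} intervals; the regime where $t_js_j$ is comparable to $V$ is what generates the middle term $q^{1/2}s^{1/4}c^{1/2}V^{r+1/2}$, and the diagonal tuples give $qV^r$. A blind H\"older split into $\sum_v|A|^{\kappa}$, $\sum_v|B|^{\kappa}$, $\sum_v|C|^{\kappa}$ loses this interaction and it is not clear how any choice of exponents would recover the $s^{1/4}$ factor.

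Second, your end-game for $K_{r,m}(\lambda,V)$ is pointed at the wrong tool. After the resultant is forced to vanish (and note you must first discard solutions with repeated coordinates — handled in the paper by induction on $r$ — since otherwise $P_v$ can vanish identically and the resultant argument collapses), the residual problem is to count solutions of
\begin{align*}
\frac{1}{\sigma+v_1}+\dots+\frac{1}{\sigma+v_r}=\frac{1}{\sigma+v_{r+1}}+\dots+\frac{1}{\sigma+v_{2r}}
\end{align*}
over $\C$ at a fixed algebraic $\sigma$, with the bound $V^{r+o(1)}$; this is Bourgain--Garaev's Lemma~6, not a divisor-counting argument in $\Z[\sigma]$ — divisor counting is the device for the multiplicative congruence~\eqref{eq:multcong} and does not transfer to sums of reciprocals. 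Also, no geometry-of-numbers construction of an auxiliary $Q$ is needed: one fixed solution $v^{*}$ already supplies a polynomial $P_{v^{*}}$ of small height. Finally, the resultant mechanism only yields $K_{r,m}(\lambda,V)\ll V^{r+o(1)}$ in the restricted range $V\ll m^{1/4r(r-1)}$; to cover all $V$ one must subdivide the interval and apply H\"older (as in Corollary~\ref{lem:eqnKloosterman1}), giving $V^{2r}m^{-1/4(r-1)}$, and it is this combination, applied with $m\sim c^{1/2}$ inside Lemma~\ref{lem:charKloos}, that produces the exponent $1/16r(r-1)$; asserting the saving without this step leaves the quantitative claim unsupported.
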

The estimate of Theorem~\ref{thm:main1} may be stated in the following less precise form.
\begin{cor}
\label{cor:main1}
Let $q$ be an integer with cubefull part $c$. For any primitive character $\chi$ mod $q$ and integer $V$ we have 
\begin{align*}
&\sum_{\lambda=1}^{q}\left|\sum_{1\le v \le V}\chi(\lambda+v) \right|^{2r}\ll qV^{r}
\\&+q^{3/4}c^{1/4}V^{r+1/2}q^{o(1)}+q^{1/2}c^{1/2-1/16r(r-1)}V^{2r}q^{o(1)}.
\end{align*}
\end{cor}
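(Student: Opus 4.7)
The plan is to derive Corollary~\ref{cor:main1} as an immediate consequence of Theorem~\ref{thm:main1} by eliminating the parameter $s$ in favor of $q$ and $c$. The three terms on the right-hand side of Theorem~\ref{thm:main1} correspond term-by-term to the three terms of Corollary~\ref{cor:main1}: the first term $qV^r$ matches exactly, the third term involving $c^{1/2-1/16r(r-1)}V^{2r}$ matches exactly, so the only work is to bound the middle term $q^{1/2+o(1)} s^{1/4} c^{1/2} V^{r+1/2}$ in a way that is independent of $s$.

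First I would observe that the decomposition $q = q_1 s c$ with $q_1 \ge 1$ immediately gives $sc \le q$, hence
\begin{align*}
s^{1/4} \le q^{1/4} c^{-1/4}.
\end{align*}
Substituting this into the middle term of Theorem~\ref{thm:main1} yields
\begin{align*}
q^{1/2+o(1)} s^{1/4} c^{1/2} V^{r+1/2} \le q^{1/2+o(1)} \cdot q^{1/4} c^{-1/4} \cdot c^{1/2} V^{r+1/2} = q^{3/4+o(1)} c^{1/4} V^{r+1/2},
\end{align*}
which is precisely the middle term of Corollary~\ref{cor:main1}. Combining the three bounds then gives the stated estimate.

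There is no serious obstacle here, since the corollary is simply a less precise restatement of Theorem~\ref{thm:main1} in which the squarefull-but-not-cubefull part $s$ is absorbed into the factors $q$ and $c$. The only subtlety worth flagging is that this crude bound is wasteful exactly when $s$ is significantly smaller than $q/c$, which is why the precise form in Theorem~\ref{thm:main1} is retained as the main statement and Corollary~\ref{cor:main1} is presented purely as a convenient shorthand.
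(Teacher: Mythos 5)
Your deduction is correct and is exactly how the paper intends Corollary~\ref{cor:main1} to follow from Theorem~\ref{thm:main1}: since $q=q_1sc$ with $q_1\ge 1$ one has $s\le q/c$, so $s^{1/4}c^{1/2}\le q^{1/4}c^{1/4}$, which turns the middle term $q^{1/2+o(1)}s^{1/4}c^{1/2}V^{r+1/2}$ into $q^{3/4+o(1)}c^{1/4}V^{r+1/2}$ while the other two terms are unchanged. Nothing further is needed.
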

In applications one usually takes $V\sim q^{1/2r}$ and in this range the term $q^{3/4}c^{1/2}V^{r+1/2}$ can be ignored, provided $c$ is suitably small. Corollary~\ref{cor:main1} should be compared with the estimate 
\begin{align*}
&\sum_{\lambda=1}^{q}\left|\sum_{1\le v \le V}\chi(\lambda+v) \right|^{2r}\ll qV^{r}+q^{1/2}c^{1/2}V^{2r}q^{o(1)},
\end{align*}
obtained from the argument of Burgess and treating summation over cubefull terms trivially.
\newline

 Using Corollary~\ref{cor:main1} and well known techniques we deduce the following character sum estimate.
\begin{theorem}
\label{thm:charsumest}
Let $q$ be an integer with cubefull part $c$. For any primitive character $\chi$ mod $q$ and integers $M,N$ 
we have 
\begin{align*}
\sum_{M\le n \le M+N}\chi(n)\ll N^{1-1/r}q^{(r+1)/4r^2+o(1)}c^{(r-1)/4r^2-1/32r^3}.
\end{align*}
\end{theorem}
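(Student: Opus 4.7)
I would deduce Theorem~\ref{thm:charsumest} from Corollary~\ref{cor:main1} via the classical Burgess amplification argument. The first step is the Burgess shift: introduce parameters $P$ and $V$, and let $\cA$ be the set of primes in $(P, 2P]$ coprime to $q$, of size $|\cA| \asymp P/\log P$. For each pair $(p, v) \in \cA \times [1, V]$ the shift $n \mapsto n + pv$ changes $S$ by at most $O(PV)$, and averaging over $(p, v)$ together with the multiplicative identity $\chi(n + pv) = \chi(p)\chi(\bar p n + v)$ gives
\begin{align*}
|\cA|\,V\,|S| \ll \sum_{\mu \bmod q} W(\mu)\, |T(\mu)| + |\cA|\,P\,V^{2},
\end{align*}
where $T(\mu) = \sum_{1 \le v \le V}\chi(\mu+v)$ and $W(\mu)$ counts pairs $(p, n) \in \cA \times (M, M+N]$ with $\bar p n \equiv \mu \pmod q$; one has $W(\mu) \ll |\cA|$ for $N \le q$ and $\sum_\mu W(\mu) = |\cA|\,N$.

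Next, apply the standard Burgess form of Hölder's inequality to the weighted sum $\sum_\mu W(\mu)|T(\mu)|$, reducing it to the $2r$-th moment $M_{2r} = \sum_{\mu\bmod q}|T(\mu)|^{2r}$. Combined with the shift-error term $|\cA|\,P\,V^{2}$, this produces the classical Burgess-type inequality bounding $|S|$ in terms of $M_{2r}^{1/(2r)}$, $N$, $V$, and $P$.

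Now insert Corollary~\ref{cor:main1} and optimize. Choose $V$ near the classical Burgess optimum $V \asymp q^{1/(2r)}$ and take $P$ so that the shift error matches the main term. The three terms of Corollary~\ref{cor:main1} contribute as follows. The first term $qV^{r}$ yields the classical Burgess $q$-factor $q^{(r+1)/(4r^{2})+o(1)}$ with no $c$-dependence. The second term $q^{3/4}c^{1/4}V^{r+1/2}$ stays lower-order at $V \asymp q^{1/(2r)}$ for $c$ not too large. The third term $q^{1/2}c^{1/2-1/(16r(r-1))}V^{2r}$, with its refined $c$-exponent, supplies the $c$-dependence in the final bound, producing the stated factor $c^{(r-1)/(4r^{2})-1/(32r^{3})}$ after passing through the $(2r)$-th root and the Burgess scaling.

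The main obstacle is the careful parameter optimization and exponent bookkeeping: one must verify that the classical Burgess choice $V \asymp q^{1/(2r)}$ remains near-optimal despite the new middle term $q^{3/4}c^{1/4}V^{r+1/2}$ in Corollary~\ref{cor:main1}, and one must check that the moment improvement factor $c^{-1/(16r(r-1))}$ translates, through the Hölder reduction and the $(2r)$-th root, into precisely the claimed final improvement $c^{-1/(32r^{3})}$ rather than a naively expected $c^{-1/(32r^{2}(r-1))}$.
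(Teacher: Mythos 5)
Your proposal follows essentially the same route the paper intends: the paper gives no separate proof of Theorem~\ref{thm:charsumest}, deducing it from Corollary~\ref{cor:main1} by exactly the ``well known'' Burgess shifting/H\"{o}lder amplification you describe. On your bookkeeping worry, the stated exponent is the right one: with the natural choice $V\asymp q^{1/2r}c^{-\beta/r}$, where $\beta=1/2-1/(16r(r-1))$ is the $c$-exponent in the third term of Corollary~\ref{cor:main1}, the moment saving enters the final bound through the factor $(r-1)/(2r^2)$ (not merely $1/(2r)$), giving $c^{\beta(r-1)/(2r^2)}=c^{(r-1)/4r^2-1/32r^3}$ as claimed.
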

Comparing the estimate of Theorem~\ref{thm:charsumest} with previous results, we note that Norton~\cite[Theorem~1.6]{Nor} has obtained
\begin{align*}
\sum_{M\le n \le M+N}\chi(n)\ll c^{3/4r}N^{1-1/r}q^{(r+1)/4r^2+o(1)},
\end{align*}
and hence our bound is sharper in the $c$ aspect. We note that the estimate of Norton also contains a factor involving the order of $\chi$ although this is redundant in our setting from the assumption $\chi$ is primitive. 
\section{Reduction to equations with Kloosterman fractions}
\label{sec:2}
The main result of this section is a  reduction  of mean values of character sums to counting solutions to  congruences with Kloosterman fractions. Given integers $q,\lambda,V,r$ we recall that $K_{r,q}(\lambda,V)$ counts the number of solutions to the congruence 
\begin{align}
\label{eq:Kdefeqn}
\frac{1}{\lambda+v_1}+\dots+\frac{1}{\lambda+v_r}\equiv \frac{1}{\lambda+v_{r+1}}+\dots+\frac{1}{\lambda+v_{2r}} \mod{q},
\end{align}
with variables satifying 
\begin{align}
\label{eq:Kdefcond}
|v_1|,\dots,|v_{2r}|\le V.
\end{align}

\begin{lemma}
\label{lem:charKloos}
Let $q$ be an integer with factorization 
$$q=q_1\prod_{k\in \cK}p_k^2\prod_{i\in \cI}p_i^{2\alpha_i}\prod_{j\in \cJ}p_{j}^{2\beta_j+1},$$
with $q_1$ squarefree, $\cK,\cI,\cJ$ disjoint sets of integers and $\alpha_i\ge 2, \beta_j\ge 1$, and define $q_2,\dots,q_5$ by

\begin{align}
\label{eq:qsdef}
q_2=\prod_{k\in \cK}p_k, \quad q_3=\prod_{i\in \cI}p_i^{\alpha_i}, \quad  q_4=\prod_{j\in \cJ}p_j^{\beta_j}, \quad q_5=\prod_{j\in \cJ}p_j,
\end{align}
where we let $p_i$ denote the $i$-th prime. For any primitive character $\chi$ mod $q$ and integer $V\le q$ we have 
\begin{align*}
&\sum_{\lambda=1}^{q}\left|\sum_{1\le v \le V}\chi(\lambda+v) \right|^{2r}\ll qV^{r}+q^{1/2+o(1)}q_3q_4V \\ & \times \sum_{\substack{d|q_5 \\\ t_2\dots t_{2r}|q_1 \\ s_2\dots s_{2r}|q_2 \\ t_js_j \ll V}}(t_2\dots t_{2r})^{1/2}d^{1/2}s_2\dots s_{2r}\prod_{j=2}^{2r}\max_{\lambda}K_{r,q_3q_4d}(\lambda,V/t_js_j)^{1/2r}.
\end{align*}

\end{lemma}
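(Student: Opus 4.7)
The plan is to begin from the standard expansion~\eqref{eq:mvc1},
$$\sum_{\lambda=1}^{q}\Bigl|\sum_{1\le v\le V}\chi(\lambda+v)\Bigr|^{2r}\le \sum_{\mathbf v}\Bigl|\sum_{\lambda=1}^{q}\chi\bigl(F_{\mathbf v}(\lambda)\bigr)\Bigr|,$$
and to separate off the diagonal tuples, that is those $\mathbf v=(v_1,\dots,v_{2r})$ for which $(v_1,\dots,v_r)$ is a permutation of $(v_{r+1},\dots,v_{2r})$: for these $F_{\mathbf v}\equiv 1$, the inner complete sum equals $q$, and their total contribution is $\ll qV^r$. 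For the remaining tuples the complete sum is factored by the Chinese remainder theorem according to the decomposition $q=q_1\cdot q_2^2\cdot q_3^2\cdot q_4^2 q_5$, so that each factor can be treated by a method adapted to the arithmetic structure of its modulus.

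For the squarefree part $q_1$, I would apply the Weil bound prime by prime: one saves a factor $p^{1/2}$ unless $p$ divides some difference $v_i-v_j$ between indices occurring in the numerator and denominator of $F_{\mathbf v}$. Tracking such degeneracies by summing over divisors $t_2\cdots t_{2r}\mid q_1$, where $t_j$ records the part of $q_1$ forced into the difference $v_j-v_1$, produces the factor $(t_2\cdots t_{2r})^{1/2}$ together with the constraint $t_j\ll V$ (since $|v_j-v_1|\le 2V$). The analogous bookkeeping at the $q_2$-primes, where the local modulus is a square, yields the auxiliary divisors $s_j\mid q_2$ and the factor $s_2\cdots s_{2r}$, strengthened to the joint range $t_j s_j\ll V$.

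For the squareful factors (primes with exponent $\ge 2$ in $q_2^2 q_3^2 q_4^2 q_5$) I would invoke the stationary-phase reduction of~\cite[Chapter~12]{IwKo}: in a complete sum modulo $p^e$ write $\lambda=\mu+p^{\lceil e/2\rceil}\nu$; Taylor expansion and orthogonality over $\nu$ then collapse the sum to the stationary locus
$$\frac{F'_{\mathbf v}(\mu)}{F_{\mathbf v}(\mu)}\equiv 0\pmod{p^{\lfloor e/2\rfloor}},$$
with an additional Gauss sum of size $\ll p^{1/2}$ when $e$ is odd. The crucial observation is the logarithmic-derivative identity
$$\frac{F'_{\mathbf v}(\mu)}{F_{\mathbf v}(\mu)}=\sum_{i=1}^{r}\frac{1}{\mu+v_i}-\sum_{i=r+1}^{2r}\frac{1}{\mu+v_i},$$
which identifies the stationary condition with precisely the Kloosterman-fraction congruence~\eqref{eq:Kdefeqn}. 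Reassembling the CRT factors, the primes of $q_3^2$ and $q_4^2$ combine with the subset $d\mid q_5$ where stationarity actually occurs to form the modulus $q_3 q_4 d$ of the inner Kloosterman count; the Gauss sums at the remaining primes of $q_5$ and the Weil savings on $q_1$ together supply the $q^{1/2+o(1)}$ prefactor and an additional $d^{1/2}$, while the trivial sizes $p^{\lceil e/2\rceil}$ at the stationary moduli combine into $q_3 q_4$; a final factor of $V$ comes from the pivoted variable $v_1$.

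The last step is Hölder's inequality with exponent $2r$ applied to the $(2r-1)$-fold sum over $v_2,\dots,v_{2r}$, which symmetrises the count and produces the product $\prod_{j=2}^{2r}\max_\lambda K_{r,q_3q_4d}(\lambda,V/t_js_j)^{1/2r}$ appearing in the statement. The main technical obstacle will be a careful treatment of non-coprimality: when $\mu+v_i$ shares a factor with a prime divisor of the modulus, the logarithmic-derivative identity breaks down and the offending prime power must be peeled off before applying stationary phase. These exceptional cases are exactly what the divisor cascades $t_j,s_j,d$ absorb, with the range restriction $t_j s_j\ll V$ ensuring that $v_j$ only ranges over an effective interval of length $V/(t_j s_j)$; carrying this out uniformly over every prime factor of $q$ is what produces the elaborate shape of the final bound.
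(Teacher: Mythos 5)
Your outline follows essentially the same route as the paper's proof: expansion of the $2r$-th power, Chinese remainder factorization of the complete sum, square-root-of-modulus bounds at each local factor with the stationary locus described by the logarithmic derivative (this is exactly the content of the quoted Burgess lemmas, Lemmas~\ref{lem:evenprimepower}--\ref{lem:oddprimepower}, rather than a fresh stationary-phase computation from~\cite[Chapter~12]{IwKo}), identification of the stationary condition with the Kloosterman-fraction congruence~\eqref{eq:Kdefeqn}, the divisor bookkeeping $t_j\mid q_1$, $s_j\mid q_2$ arising from $(A_1(v),q_1)$ and $(A_1(v),q_2)$, the pivot on $v_1$ costing a factor $V$, and additive characters plus H\"older giving $\prod_{j=2}^{2r}\max_{\lambda}K_{r,q_3q_4d}(\lambda,V/t_js_j)^{1/2r}$. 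Two attributions are slightly off but harmless: the divisor $d\mid q_5$ records, at each prime $p_j^{2\beta_j+1}$, which of the two terms $p_j^{\beta_j+1/2}N_v(p_j^{\beta_j})$ or $p_j^{\beta_j}N_v(p_j^{\beta_j+1})$ of Lemma~\ref{lem:oddprimepower} is taken (the net $d^{1/2}$ is $d^{-1/2}$ from the second term times the $q_3q_4d$ values of $\lambda$ after interchanging the $v$- and $\lambda$-summations), and the coprimality condition $(f_{v_1}(\lambda)f_{v_2}(\lambda),q_3q_4d)=1$ is built into the definition of $N_v$, not absorbed by the cascades $t_j,s_j,d$.

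The one step that would fail as written is your treatment of coincidences among the $v_i$. You remove only the permutation-diagonal tuples and then assert $t_j\ll V$ ``since $|v_j-v_1|\le 2V$''; this presumes $v_j\neq v_1$ for every $j\ge 2$, i.e. $A_1(v)\neq 0$. Tuples in which $v_1$ equals some other coordinate but which are not permutation-diagonal number on the order of $V^{2r-1}$, far too many to be absorbed into the $qV^r$ term by a trivial estimate, and for them $(v_j-v_1,q_1)$ can equal $q_1$ itself, destroying both the Weil saving and the reduction of the $u_j$-range to length $V/t_js_j$. The paper handles this by partitioning into $\cV'$ (at most $r$ distinct coordinates, only $O(V^r)$ tuples, estimated trivially into $qV^r$) and the sets $\cV_\ell$ (at least $r+1$ distinct coordinates, so by pigeonhole some $A_\ell(v)\neq 0$), and then using the symmetry $S_\ell\ll S_1$ so that the pivot index can always be chosen with its coordinate distinct from all the others. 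Some such device is needed before your divisor cascade and the pivot on $v_1$ are legitimate; with it, your argument matches the paper's.
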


We adopt the following notation throughout this section. Given a $2r$-tuple of integers $v=(v_1,\dots,v_{2r})$ we define the polynomials
\begin{align}
\label{eq:ffdef}
f_{v_1}(x)=\prod_{j=1}^{r}(x-v_j), \quad f_{v_2}(x)=\prod_{j=1}^{r}(x-v_{j+r}).
\end{align}
Given an integer $q$ we let $N_{v}(q)$ count the number of solutions to the congruence 
\begin{align}
\label{eq:Nvequation}
f'_{v_1}(\lambda)f_{v_2}(\lambda)-f_{v_1}(\lambda)f'_{v_2}(\lambda)\equiv 0 \mod{q}, 
\end{align}
with variable $\lambda$ satisfying
\begin{align}
\label{eq:Nvcondition}
(f_{v_1}(\lambda)f_{v_2}(\lambda),q)=1, \quad  0\le \lambda < q.
\end{align}
The following is a direct application of the Chinese remainder theorem.
\begin{lemma}
\label{lem:Nmult}
For $q_1$ and $q_2$ coprime we have 
\begin{align*}
N_v(q_1)N_v(q_2)\le N_v(q_1q_2).
\end{align*}
\end{lemma}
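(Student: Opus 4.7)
The plan is a direct application of the Chinese remainder theorem in its standard guise for polynomial congruences. Since $(q_1,q_2)=1$, the CRT map $\Z/q_1q_2\Z \to \Z/q_1\Z \times \Z/q_2\Z$ given by $\lambda \mapsto (\lambda \bmod q_1, \lambda \bmod q_2)$ is a ring isomorphism. I would work in the reverse direction: produce an injection from the set of admissible pairs into the set of admissible $\lambda$ mod $q_1q_2$.

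First, let $\lambda_1$ be a solution to \eqref{eq:Nvequation}--\eqref{eq:Nvcondition} modulo $q_1$ and $\lambda_2$ a solution modulo $q_2$. By CRT there exists a unique $\lambda\in\{0,\dots,q_1q_2-1\}$ with $\lambda\equiv\lambda_i\pmod{q_i}$ for $i=1,2$. The map $(\lambda_1,\lambda_2)\mapsto\lambda$ is injective, so it suffices to check that the resulting $\lambda$ satisfies both \eqref{eq:Nvequation} and \eqref{eq:Nvcondition} modulo $q_1q_2$.

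Next, since $f_{v_1},f_{v_2}$ (and hence $f_{v_1}',f_{v_2}'$) are polynomials in $\Z[x]$, reduction mod $q_i$ is a ring homomorphism, so
$$f_{v_1}'(\lambda)f_{v_2}(\lambda)-f_{v_1}(\lambda)f_{v_2}'(\lambda)\equiv f_{v_1}'(\lambda_i)f_{v_2}(\lambda_i)-f_{v_1}(\lambda_i)f_{v_2}'(\lambda_i)\equiv 0\pmod{q_i}$$
for $i=1,2$. Since $q_1$ and $q_2$ are coprime, the congruence lifts to one modulo $q_1q_2$, verifying \eqref{eq:Nvequation}. For \eqref{eq:Nvcondition}, note $f_{v_1}(\lambda)f_{v_2}(\lambda)\equiv f_{v_1}(\lambda_i)f_{v_2}(\lambda_i)\pmod{q_i}$, which is coprime to $q_i$ by assumption; hence $f_{v_1}(\lambda)f_{v_2}(\lambda)$ is coprime to both $q_1$ and $q_2$, and therefore to $q_1q_2$.

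This produces $N_v(q_1)N_v(q_2)$ distinct admissible values of $\lambda$ mod $q_1q_2$, giving the claimed inequality $N_v(q_1)N_v(q_2)\le N_v(q_1q_2)$. There is no real obstacle here; the only point to watch is that \eqref{eq:Nvcondition} is a coprimality condition rather than a nonvanishing condition, which is precisely what makes CRT compatible with the setup and in fact yields equality. The statement is given as an inequality presumably because only this direction is needed in subsequent applications.
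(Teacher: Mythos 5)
Your proof is correct and is exactly the argument the paper intends: the paper gives no written proof beyond remarking that the lemma is a direct application of the Chinese remainder theorem, and your write-up simply supplies those routine details (CRT lifting of solutions, preservation of the congruence and of the coprimality condition). Your side remark that the map is in fact a bijection, so equality holds, is also correct; the paper only needs the stated inequality.
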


We recall some results of Burgess. The following is~\cite[Lemma~2]{Bur1}.
\begin{lemma}
\label{lem:evenprimepower}
Let $p$ be prime, $\alpha$ an integer and $\chi$ a primitive character mod $p^{2\alpha}$. We have 
\begin{align*}
\left|\sum_{\lambda=1}^{p^{2\alpha}}\chi(f_{v_1}(\lambda))\overline \chi(f_{v_2}(\lambda))\right|\le p^{\alpha}N_{v}(p^{\alpha}).
\end{align*}
\end{lemma}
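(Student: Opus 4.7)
The plan is to apply the standard $p$-adic Taylor expansion that underlies Burgess-type complete sum estimates modulo prime powers. First I write every $\lambda \in \{1, \dots, p^{2\alpha}\}$ uniquely as $\lambda = \mu + \nu p^{\alpha}$ with $0 \le \mu, \nu < p^{\alpha}$. Since $f_{v_i}(\lambda) \equiv f_{v_i}(\mu) \pmod{p}$ for $i = 1, 2$, the summand vanishes unless $(f_{v_1}(\mu) f_{v_2}(\mu), p) = 1$, so attention may be restricted to such $\mu$. For these $\mu$, the integrality of Taylor coefficients of an integer polynomial at an integer point gives the exact congruence
\begin{align*}
f_{v_i}(\mu + \nu p^{\alpha}) \equiv f_{v_i}(\mu)\bigl(1 + \nu p^{\alpha} f'_{v_i}(\mu) u_i\bigr) \pmod{p^{2\alpha}},
\end{align*}
where $u_i$ is the multiplicative inverse of $f_{v_i}(\mu)$ modulo $p^{\alpha}$; higher-order Taylor terms carry a factor of $p^{2\alpha}$ and vanish.

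Using multiplicativity of $\chi$ and the identity $(1 + p^{\alpha} y_1)(1 + p^{\alpha} y_2) \equiv 1 + p^{\alpha}(y_1 + y_2) \pmod{p^{2\alpha}}$, the contribution of a fixed $\mu$ becomes
\begin{align*}
\chi(f_{v_1}(\mu))\, \overline{\chi(f_{v_2}(\mu))} \sum_{\nu=0}^{p^{\alpha}-1} \chi\bigl(1 + \nu p^{\alpha} H(\mu)\bigr),
\end{align*}
where $H(\mu) \equiv u_1 u_2\bigl(f'_{v_1}(\mu) f_{v_2}(\mu) - f_{v_1}(\mu) f'_{v_2}(\mu)\bigr) \pmod{p^{\alpha}}$. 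The next step invokes the classical fact that for $\chi$ primitive modulo $p^{2\alpha}$ there exists an integer $a$ coprime to $p$ such that $\chi(1 + p^{\alpha} y) = e^{2\pi i a y / p^{\alpha}}$ for every $y \in \Z$; this holds because the map $y \mapsto 1 + p^{\alpha} y$ is a group isomorphism from $\Z/p^{\alpha}\Z$ onto the subgroup $1 + p^{\alpha}\Z/p^{2\alpha}\Z$ of units, and primitivity forces the restricted additive character to be non-trivial modulo $p$, as otherwise $\chi$ would descend to a character modulo $p^{2\alpha-1}$.

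The inner sum over $\nu$ is then a complete geometric sum modulo $p^{\alpha}$, equalling $p^{\alpha}$ when $a H(\mu) \equiv 0 \pmod{p^{\alpha}}$ and $0$ otherwise. Since $a$ and $f_{v_1}(\mu) f_{v_2}(\mu)$ are invertible modulo $p^{\alpha}$, the non-vanishing condition is equivalent to
\begin{align*}
f'_{v_1}(\mu) f_{v_2}(\mu) - f_{v_1}(\mu) f'_{v_2}(\mu) \equiv 0 \pmod{p^{\alpha}},
\end{align*}
which combined with $(f_{v_1}(\mu) f_{v_2}(\mu), p) = 1$ is precisely the condition defining $N_v(p^{\alpha})$. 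Applying the triangle inequality to the surviving $\mu$'s yields the claimed bound. The only non-routine ingredient in the plan is the explicit description of $\chi$ on the subgroup $1 + p^{\alpha}\Z/p^{2\alpha}\Z$; once this is in hand, everything else is bookkeeping with the Taylor expansion and orthogonality of additive characters.
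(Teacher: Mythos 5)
Your argument is correct and is essentially the classical one: the paper gives no proof of this statement but simply cites it as Lemma~2 of Burgess (\emph{Character sums and L-series}), whose proof proceeds exactly as you do, via the splitting $\lambda=\mu+\nu p^{\alpha}$, the Taylor expansion modulo $p^{2\alpha}$, and the fact that primitivity makes $y\mapsto\chi(1+p^{\alpha}y)$ a nontrivial additive character $e_{p^{\alpha}}(ay)$ with $(a,p)=1$, so the $\nu$-sum is $p^{\alpha}$ precisely on the residues counted by $N_v(p^{\alpha})$. No gaps; the identification of the nonvanishing condition with the defining congruence and coprimality condition of $N_v(p^{\alpha})$ is exactly right.
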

The following is~\cite[Lemma~3]{Bur1}.
\begin{lemma}
\label{lem:oddevenprime}
Let $\alpha$ be an integer and $\chi$ a primitive character mod $2^{2\alpha+1}$.  We have 
\begin{align*}
\left|\sum_{\lambda=1}^{2^{2\alpha+1}}\chi(f_{v_1}(\lambda))\overline \chi(f_{v_2}(\lambda))\right|\le 2^{\alpha+1}N_{v}(2^{\alpha}).
\end{align*}
\end{lemma}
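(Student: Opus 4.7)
The plan is to follow the square-root trick of Burgess for prime-power moduli, adapted to the $2$-adic case. Write each residue class modulo $q=2^{2\alpha+1}$ as $\lambda=\mu+2^{\alpha+1}\nu$ with $\mu\in[0,2^{\alpha+1})$ and $\nu\in[0,2^{\alpha})$; this is a bijection onto $[0,2^{2\alpha+1})$. Since $f_{v_1}$ and $f_{v_2}$ are polynomials, Taylor expansion gives
$$f_{v_i}(\mu+2^{\alpha+1}\nu)\equiv f_{v_i}(\mu)+2^{\alpha+1}\nu f'_{v_i}(\mu)\pmod{2^{2\alpha+1}},\qquad i=1,2,$$
because every further term carries a factor $2^{2\alpha+2}$.

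Terms with $f_{v_1}(\mu)f_{v_2}(\mu)$ even contribute nothing, as $\chi$ vanishes on even integers, so restrict to $\mu$ with $(f_{v_1}(\mu)f_{v_2}(\mu),2)=1$. For such $\mu$, factor
$$\chi\bigl(f_{v_i}(\mu+2^{\alpha+1}\nu)\bigr)=\chi(f_{v_i}(\mu))\,\chi\bigl(1+2^{\alpha+1}\nu f'_{v_i}(\mu)\overline{f_{v_i}(\mu)}\bigr),$$
where $\overline{\phantom{x}}$ denotes inversion modulo $2^{\alpha}$. The subgroup $\{1+2^{\alpha+1}t\}$ of $(\Z/2^{2\alpha+1}\Z)^{\times}$ is cyclic of order $2^{\alpha}$ (for $\alpha\ge 1$), so there is an integer $a$ with $\chi(1+2^{\alpha+1}t)=e(at/2^{\alpha})$ for every integer $t$. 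Primitivity of $\chi$ forces $(a,2)=1$: choosing $t=2^{\alpha-1}$ gives $\chi(1+2^{2\alpha})=(-1)^{a}$, and if $a$ were even then $\chi$ would factor through $\Z/2^{2\alpha}\Z$. This is the $2$-adic linearization that replaces the usual expansion $\chi(1+p^{\alpha}t)=e(at/p^{\alpha})$ familiar from odd prime powers.

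Combining these identities yields, for admissible $\mu$,
$$\chi(f_{v_1}(\mu+2^{\alpha+1}\nu))\overline{\chi}(f_{v_2}(\mu+2^{\alpha+1}\nu))=\chi(f_{v_1}(\mu))\overline{\chi}(f_{v_2}(\mu))\,e\!\left(\frac{a\nu D(\mu)}{2^{\alpha}}\right),$$
where $D(\mu)=\bigl(f'_{v_1}(\mu)f_{v_2}(\mu)-f_{v_1}(\mu)f'_{v_2}(\mu)\bigr)\overline{f_{v_1}(\mu)f_{v_2}(\mu)}$. Summing over $\nu\in\Z/2^{\alpha}\Z$ by orthogonality of additive characters produces a factor $2^{\alpha}$ precisely when $D(\mu)\equiv 0\pmod{2^{\alpha}}$, which (since $f_{v_1}(\mu)f_{v_2}(\mu)$ is a unit) is equivalent to congruence~\eqref{eq:Nvequation} modulo $2^{\alpha}$ together with condition~\eqref{eq:Nvcondition}.

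The total sum is therefore bounded in absolute value by $2^{\alpha}$ times the number of $\mu\in[0,2^{\alpha+1})$ satisfying the reduced congruence and coprimality constraint. Every solution modulo $2^{\alpha}$ admits exactly two lifts modulo $2^{\alpha+1}$, so this count is at most $2N_{v}(2^{\alpha})$, producing the claimed bound $2^{\alpha+1}N_{v}(2^{\alpha})$. The main delicacy is establishing the linearization: the group $1+2\Z/2^{n}\Z$ fails to be cyclic, which is why one must expand about the subgroup $1+2^{\alpha+1}\Z/2^{2\alpha+1}\Z$ rather than $1+2^{\alpha}\Z/2^{2\alpha+1}\Z$, and this shift is precisely what forces the extra factor of $2$ in front of $N_{v}(2^{\alpha})$ compared with the odd-prime analogue of Lemma~\ref{lem:evenprimepower}.
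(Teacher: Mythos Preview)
Your proof is correct and is essentially Burgess's original argument: split $\lambda=\mu+2^{\alpha+1}\nu$, linearize $\chi$ on the subgroup $1+2^{\alpha+1}\Z$ (which you correctly note is cyclic and on which primitivity forces the relevant coefficient $a$ to be odd), and execute the inner sum over $\nu$ by orthogonality. The paper does not give its own proof of this lemma but simply cites \cite[Lemma~3]{Bur1}; your write-up reproduces that method, including the observation that one must split at level $2^{\alpha+1}$ rather than $2^{\alpha}$ because $(\Z/2^{n}\Z)^{\times}$ is not cyclic, which is exactly the source of the extra factor of~$2$ relative to Lemma~\ref{lem:evenprimepower}.
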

The following is ~\cite[Lemma~4]{Bur1}.
\begin{lemma}
\label{lem:oddprimepower}
Let $p$ be prime, $\alpha\ge 1$ an integer and $\chi$ a primitive character mod $p^{2\alpha+1}$. We have 
\begin{align*}
\left|\sum_{\lambda=1}^{p^{2\alpha+1}}\chi(f_{v_1}(\lambda))\overline \chi(f_{v_2}(\lambda))\right|\le p^{\alpha+1/2}N_{v}(p^{\alpha})+p^{\alpha}N_{v}(p^{\alpha+1}).
\end{align*}
\end{lemma}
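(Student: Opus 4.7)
The plan is to carry out a two-level $p$-adic Taylor/logarithm expansion for primitive characters modulo an odd prime power, in the spirit of the standard proof of such lemmas. Parametrize $\lambda\in[0,p^{2\alpha+1})$ uniquely as $\lambda=a+bp^{\alpha}$ with $a\in[0,p^{\alpha})$ and $b\in[0,p^{\alpha+1})$, discard those $a$ with $(f_{v_1}(a)f_{v_2}(a),p)>1$ (the character vanishes there), and Taylor expand
\[
f_{v_i}(\lambda)\equiv f_{v_i}(a)\bigl(1+bp^{\alpha}A_i(a)+b^2p^{2\alpha}\widetilde B_i(a)\bigr)\pmod{p^{2\alpha+1}},
\]
where $A_i=f'_{v_i}/f_{v_i}$ and $\widetilde B_i=f''_{v_i}/(2f_{v_i})$; all higher Taylor terms vanish modulo $p^{2\alpha+1}$ when $\alpha\ge 1$ and $p$ is odd. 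Using the standard expansion $\chi(1+y)=e_{p^{2\alpha+1}}(h_\chi L(y))$ for a fixed $h_\chi$ coprime to $p$, with $L$ the truncated $p$-adic logarithm, and retaining only the $y$ and $y^2/2$ contributions, one obtains
\[
\chi(f_{v_1}(\lambda))\overline\chi(f_{v_2}(\lambda))=\chi(f_{v_1}(a))\overline\chi(f_{v_2}(a))\,e_{p^{\alpha+1}}(h_\chi b\Delta_A(a))\,e_p(h_\chi b^2\Delta_C(a)),
\]
where $\Delta_A=A_1-A_2$ and $\Delta_C=\widetilde B_1-\widetilde B_2-\tfrac12(A_1^2-A_2^2)$, which simplifies to the useful identity $\Delta_C=\Delta_A'/2$.

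Interchanging the $a$- and $b$-summations, the inner sum $T(a)$ factors: decomposing $b=b_0+b_1 p$ with $b_0\in[0,p)$ and $b_1\in[0,p^{\alpha})$, the $b_1$-part collapses to $p^{\alpha}$ precisely when $\Delta_A(a)\equiv 0\pmod{p^{\alpha}}$, and vanishes otherwise. The remaining $b_0$-sum is a quadratic exponential sum modulo $p$. When $\Delta_C(a)\not\equiv 0\pmod p$, the classical Gauss sum estimate gives $|T(a)|\le p^{\alpha+1/2}$, and summing over the at most $N_v(p^\alpha)$ admissible $a$ produces the first term $p^{\alpha+1/2}N_v(p^\alpha)$. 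When $\Delta_C(a)\equiv 0\pmod p$, the quadratic term disappears and the $b_0$-sum equals $p$ exactly if $\Delta_A(a)\equiv 0\pmod{p^{\alpha+1}}$ (so that the linear coefficient also vanishes), giving $T(a)=p^{\alpha+1}$, and is zero otherwise.

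The key step, and the hardest one, is sharpening the naive bound $p^{\alpha+1}N_v(p^{\alpha+1})$ arising from this second case down to $p^{\alpha}N_v(p^{\alpha+1})$. Here the identity $\Delta_C=\Delta_A'/2$ is crucial: together, $\Delta_A(a)\equiv 0\pmod{p^{\alpha+1}}$ and $\Delta_C(a)\equiv 0\pmod p$ say that $a\in[0,p^\alpha)$ is a \emph{singular} zero of $\Delta_A$ modulo $p^{\alpha+1}$. Taylor's formula then gives, for every $t\in[0,p)$,
\[
\Delta_A(a+tp^{\alpha})\equiv\Delta_A(a)+tp^{\alpha}\Delta_A'(a)\equiv 0\pmod{p^{\alpha+1}},
\]
so that all $p$ lifts of $a$ to $[0,p^{\alpha+1})$ satisfy the polynomial congruence defining $N_v(p^{\alpha+1})$ (the coprimality of $f_{v_1}f_{v_2}$ with $p$ is preserved under such lifts, which is why the rational congruence $\Delta_A\equiv 0$ matches the polynomial congruence in the definition of $N_v$). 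Hence $p\cdot\#\{\text{such }a\}\le N_v(p^{\alpha+1})$, the bad-case contribution is at most $p^{\alpha}N_v(p^{\alpha+1})$, and adding the two estimates gives the lemma. The only remaining care is in the truncation of the $p$-adic logarithm: the cubic tail $y^3/3$ vanishes mod $p^{2\alpha+1}$ for $p\ge 5$ and $\alpha\ge 1$; for $p=3$ a cubic exponential sum in $b_0$ replaces the quadratic one, but still satisfies an $O(\sqrt p)$ bound, so the conclusion is unchanged.
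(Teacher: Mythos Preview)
The paper gives no proof of this lemma, merely citing \cite[Lemma~4]{Bur1}. Your argument is the classical stationary-phase/Postnikov computation and is correct for odd $p$. The decomposition $\lambda=a+bp^{\alpha}$, the reduction via $\chi(1+y)=e_{p^{2\alpha+1}}(h_\chi L(y))$ to a quadratic sum in $b_0$, the exact Gauss bound $\sqrt p$ in the nondegenerate case, and above all the Hensel-type lifting via $\Delta_C=\tfrac12\Delta_A'$ that sharpens the naive $p^{\alpha+1}N_v(p^{\alpha+1})$ to $p^{\alpha}N_v(p^{\alpha+1})$ are all exactly right. (In the lifting step it is cleanest to transfer the congruences to the polynomial numerator $G=f_{v_1}'f_{v_2}-f_{v_1}f_{v_2}'$: from $G(a)\equiv0\pmod{p^{\alpha+1}}$ and $G'(a)\equiv0\pmod p$ --- the latter because $G'=(f_{v_1}f_{v_2})'\Delta_A+(f_{v_1}f_{v_2})\Delta_A'$ and both $\Delta_A(a),\Delta_A'(a)\equiv0\pmod p$ --- one gets $G(a+tp^{\alpha})\equiv0\pmod{p^{\alpha+1}}$ for every $t$, precisely the polynomial congruence defining $N_v$.)

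Your treatment of $p=3$ is slightly off, however. Saying the cubic sum ``still satisfies an $O(\sqrt p)$ bound'' would only recover the first term up to a constant, and does not address Case~2 (vanishing quadratic coefficient) at all. The clean fix is that for $p=3$, $\alpha=1$, the extra exponent contribution is $h_\chi b_0^3(A_1^3-A_2^3)=h_\chi b_0^3\,\Delta_A\,(A_1^2+A_1A_2+A_2^2)$ modulo $3$; since you are already restricted to $a$ with $\Delta_A(a)\equiv0\pmod3$ (from the $b_1$-sum), this term vanishes outright, and the inner $b_0$-sum is \emph{literally} the same quadratic sum as for $p\ge5$. Both cases then go through with the exact constants. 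Finally, note that the lemma as cited from Burgess is for odd $p$ (Lemma~\ref{lem:oddevenprime} handles $p=2$); the Postnikov formula you invoke is unavailable for $p=2$, so you should state that restriction explicitly.
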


The following is~\cite[Lemma~7]{Bur1} and is based on the Weil bound and Chinese remainder theorem.
\begin{lemma}
\label{lem:squarefree}
Let $q$ be squarefree and $\chi$ a primitive character mod $q$. Let 
\begin{align*}
v=(v_{1},\dots,v_{2r}),
\end{align*}
be such that 
$$|\{v_{1},\dots,v_{2r}\}|\ge r+1.$$
For integer $j$ define
\begin{align*}
A_j(v)=\prod_{\substack{i=1 \\ i\neq j}}^{2r}(v_j-v_i).
\end{align*}
There exists some $j$ with $A_j(v)\neq 0$  such that 
\begin{align*}
\left|\sum_{\lambda=1}^{q}\chi(f_{v_1}(\lambda))\overline \chi(f_{v_2}(\lambda))\right|\le (4r)^{\tau(q)}q^{1/2}(A_j(v),q)^{1/2}.
\end{align*}
\end{lemma}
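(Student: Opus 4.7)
The plan is to expand the $2r$-th moment, separate the diagonal contribution, factor the remaining inner character sums via CRT, apply the Burgess-type estimates in Lemmas~\ref{lem:evenprimepower}--\ref{lem:squarefree} at each prime-power component, and finally recognise the polynomial congruences $N_v(\cdot)$ as counts of Kloosterman-fraction congruences $K_{r,\cdot}(\cdot,\cdot)$.

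First I would expand and interchange summation so that
\begin{align*}
\sum_{\lambda=1}^{q}\left|\sum_{1\le v\le V}\chi(\lambda+v)\right|^{2r}
=\sum_{v_1,\dots,v_{2r}\in[1,V]}\sum_{\lambda=1}^{q}\chi(f_{v_1}(\lambda))\overline{\chi}(f_{v_2}(\lambda))
\end{align*}
(after a harmless sign-absorbing substitution $\lambda\to-\lambda$), and peel off the diagonal tuples with $\{v_1,\dots,v_r\}=\{v_{r+1},\dots,v_{2r}\}$ as multisets, which contribute $O(qV^r)$ --- the first term of the stated bound. For the remaining tuples I would invoke the factorisation $q=q_1\cdot q_2^2\cdot q_3^2\cdot q_4^2q_5$ and CRT to factor each inner character sum over the distinct prime-power components, then apply the appropriate Burgess-type estimate to each component: Lemma~\ref{lem:squarefree} for the product over primes of $q_1$ yields $q_1^{1/2+o(1)}(A_j(v),q_1)^{1/2}$ for some $j=j(v)$; Lemma~\ref{lem:evenprimepower} with $\alpha=1$ at each $p^2\|q$ combined via Lemma~\ref{lem:Nmult} yields $q_2\,N_v(q_2)$; Lemma~\ref{lem:evenprimepower} at each $p^{2\alpha_i}$ in $q_3^2$ yields $q_3\,N_v(q_3)$; and Lemma~\ref{lem:oddprimepower} (with Lemma~\ref{lem:oddevenprime} when $p=2\in\cJ$) at each $p^{2\beta_j+1}$ produces a two-term bound whose expansion across $j\in\cJ$ is a sum over divisors $d\mid q_5$ of $q_4(q_5/d)^{1/2}\,N_v(q_4d)$. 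Combining the $N_v$ factors multiplicatively via Lemma~\ref{lem:Nmult}, using $q^{1/2}=q_1^{1/2}q_2q_3q_4q_5^{1/2}$, and reparameterising $d\mapsto q_5/d$ in the divisor sum organises the prefactor as $q^{1/2+o(1)}q_3q_4\cdot d^{1/2}$.

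Next I would sum over $v\in[1,V]^{2r}$ with these weights. Fixing the distinguished index in Lemma~\ref{lem:squarefree} at $j=1$ (at the cost of a factor $2r$) and partitioning by the divisors
\begin{align*}
t_i=(v_i-v_1,q_1),\qquad s_i=(v_i-v_1,q_2),\qquad i=2,\dots,2r,
\end{align*}
I would use $(A_1(v),q_1)\le t_2\cdots t_{2r}$ (valid since $q_1$ is squarefree) and control $N_v(q_2)$ at each prime $p\mid q_2$ by analysing the singular solutions of the polynomial $f'_{v_1}f_{v_2}-f_{v_1}f'_{v_2}$ modulo $p$ --- the non-trivial contributions come precisely from coincidences $v_i\equiv v_1\pmod p$ and are therefore captured by the divisor weight $s_2\cdots s_{2r}$, with the generic polynomial count of size $O(1)$ absorbed into $q^{o(1)}$. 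Substituting $v_i=v_1+t_is_iw_i$ with $|w_i|\ll V/(t_is_i)$ and appealing to the identity
\begin{align*}
\frac{f'_{v_1}f_{v_2}-f_{v_1}f'_{v_2}}{f_{v_1}f_{v_2}}(\lambda)=\sum_{i=1}^r\frac{1}{\lambda-v_i}-\sum_{i=r+1}^{2r}\frac{1}{\lambda-v_i},
\end{align*}
together with the coprimality condition $(f_{v_1}f_{v_2},q_3q_4d)=1$ built into the definition of $N_v$ and the fact that $(t_is_i,q_3q_4d)=1$, identifies the count underlying $N_v(q_3q_4d)$ with a Kloosterman-fraction congruence at modulus $q_3q_4d$ in the $w_i$'s. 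Extracting the sum over $v_1\in[1,V]$ as the outer factor $V$ and then applying H\"older's inequality in the $2r-1$ variables $w_2,\dots,w_{2r}$ yields the product $\prod_{i=2}^{2r}\max_\lambda K_{r,q_3q_4d}(\lambda,V/(t_is_i))^{1/(2r)}$, assembling the second term of the stated bound.

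The main obstacle is the precise H\"older step in the last paragraph: one must arrange the $2r-1$ factors so that each $w_i$ is isolated at the correct range $V/(t_is_i)$ while the Kloosterman-fraction congruence modulo $q_3q_4d$ continues to hold. Equally delicate is the treatment of $N_v(q_2)$, which must be bounded by the divisor weight $s_2\cdots s_{2r}$ coming from the singular locus of the polynomial rather than by a cruder estimate; one also has to verify that the low-rank tuples $|\{v_1,\dots,v_{2r}\}|\le r$ excluded from Lemma~\ref{lem:squarefree} contribute at most $O(qV^r)$ and are therefore absorbed into the main term.
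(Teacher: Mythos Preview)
Your proposal does not address the stated lemma at all. Lemma~\ref{lem:squarefree} is an estimate for a \emph{single} complete character sum
\[
\left|\sum_{\lambda=1}^{q}\chi(f_{v_1}(\lambda))\overline{\chi}(f_{v_2}(\lambda))\right|
\]
with $q$ squarefree and a \emph{fixed} tuple $v$ satisfying $|\{v_1,\dots,v_{2r}\}|\ge r+1$; the conclusion is a Weil-type bound $\ll (4r)^{\tau(q)}q^{1/2}(A_j(v),q)^{1/2}$ for some $j$ with $A_j(v)\neq 0$. In the paper this lemma is not proved but simply quoted from Burgess~\cite[Lemma~7]{Bur1}, and its content is the Weil bound for curves applied prime-by-prime together with the Chinese remainder theorem.

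What you have written is instead a sketch of the proof of Lemma~\ref{lem:charKloos} (and the section ``Proof of Lemma~\ref{lem:charKloos}''), namely the reduction of the full $2r$-th moment
\[
\sum_{\lambda=1}^{q}\left|\sum_{1\le v\le V}\chi(\lambda+v)\right|^{2r}
\]
to sums of $K_{r,q_3q_4d}(\lambda,V/t_js_j)$. You even \emph{invoke} Lemma~\ref{lem:squarefree} as an input, which makes it clear you are proving a different statement. As a sketch of Lemma~\ref{lem:charKloos} your outline is broadly in line with the paper's argument (expand, split diagonal, CRT, apply Lemmas~\ref{lem:evenprimepower}--\ref{lem:squarefree}, reorganise $N_v$ via Lemma~\ref{lem:Nmult}, then convert $N_v$ to Kloosterman congruences and H\"older), but none of that is what was asked.

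To actually prove Lemma~\ref{lem:squarefree} you would need to: (i) observe that since $|\{v_1,\dots,v_{2r}\}|\ge r+1$ some $v_j$ is not repeated, so $A_j(v)\neq 0$; (ii) factor the sum over primes $p\mid q$ by CRT; (iii) at each prime $p\nmid A_j(v)$ show the rational function $f_{v_1}/f_{v_2}$ is not a $d$-th power mod $p$ (where $d$ is the order of the local character), so the Weil bound gives $O(rp^{1/2})$; and (iv) at the $O(\tau(q))$ primes $p\mid (A_j(v),q)$ use the trivial bound $p$. Multiplying these gives the stated estimate.
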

The following is~\cite[Lemma~7]{Bur2}.
\begin{lemma}
\label{lem:square}
Let $p$ be prime and suppose that $v=(v_1,\dots,v_{2r})$ satisfies $A_j(v)\neq 0$ for some $j$. Then we have 
\begin{align*}
N_v(p)\ll (A_j(v),p).
\end{align*}
\end{lemma}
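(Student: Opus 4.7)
The plan is to bound $N_v(p)$ by a direct analysis of the polynomial
$$P(x) := f'_{v_1}(x) f_{v_2}(x) - f_{v_1}(x) f'_{v_2}(x)$$
appearing on the left-hand side of congruence \eqref{eq:Nvequation}, and then to deduce the claim from a degree count together with an evaluation of $P$ at the points $v_j$.

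First I would verify the (slightly surprising) degree cancellation $\deg P \le 2r-2$. Since both $f_{v_1}$ and $f_{v_2}$ defined in \eqref{eq:ffdef} are monic of degree $r$, each of $f'_{v_1}f_{v_2}$ and $f_{v_1}f'_{v_2}$ has leading coefficient $r$ and degree $2r-1$; these leading terms cancel in the difference $P$. Next I would compute $P$ explicitly at each point $v_j$. For $j\le r$, substitution into \eqref{eq:ffdef} gives $f_{v_1}(v_j)=0$ and $f'_{v_1}(v_j)=\prod_{i\ne j,\, i\le r}(v_j-v_i)$, so the second summand of $P$ vanishes and the first collapses to
$$P(v_j)=\prod_{\substack{i\ne j \\ i\le r}}(v_j-v_i)\prod_{i=r+1}^{2r}(v_j-v_i)=A_j(v).$$
A symmetric calculation yields $P(v_j)=-A_j(v)$ for $j>r$.

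With these two ingredients the proof concludes by a case split on the hypothesized index $j$ with $A_j(v)\ne 0$. If $p\nmid A_j(v)$, then $P(v_j)\not\equiv 0\pmod p$, so $P$ is not the zero polynomial modulo $p$; since condition \eqref{eq:Nvequation} is precisely $P(\lambda)\equiv 0\pmod p$ (which upper bounds $N_v(p)$ regardless of the coprimality restriction \eqref{eq:Nvcondition}), we obtain at most $\deg P\le 2r-2 = O(1)\ll 1=(A_j(v),p)$ solutions. Otherwise $p\mid A_j(v)$, in which case $(A_j(v),p)=p$ and the trivial bound $N_v(p)\le p$ already suffices.

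There is no genuine analytic obstacle: the argument is purely algebraic, and its entire content is the pair of observations that the leading terms of $f'_{v_1}f_{v_2}$ and $f_{v_1}f'_{v_2}$ cancel and that $P(v_j)=\pm A_j(v)$. The closest thing to a subtlety is the need to notice the degree drop from $2r-1$ to $2r-2$, without which one would get the same $O(1)$ bound in the easy case anyway; the bound on the number of $\F_p$-roots of a nonzero polynomial of bounded degree then completes the proof. Note that the coprimality hypothesis \eqref{eq:Nvcondition} plays no role here and is only relevant when combining this lemma with Lemmas \ref{lem:evenprimepower}--\ref{lem:oddprimepower}.
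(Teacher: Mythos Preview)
Your argument is correct. The paper does not supply its own proof of this lemma but simply quotes it as \cite[Lemma~7]{Bur2}; your direct computation that $P(v_j)=\pm A_j(v)$ together with the degree drop to $2r-2$ is exactly the natural way to establish it, and recovers the cited result.
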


\begin{lemma}
\label{lem:BurgessCRT}
Let $q$ be an integer with factorization 
$$q=q_1\prod_{k\in \cK}p_k^2\prod_{i\in \cI}p_i^{2\alpha_i}\prod_{j\in \cJ}p_{j}^{2\beta_j+1},$$
with $q_1$ squarefree, $\cK,\cI,\cJ$ disjoint sets of integers and $\alpha_i\ge 2, \beta_j\ge 1$, and define
\begin{align}
\label{eq:qsdef}
q_2=\prod_{k\in \cK}p_k, \quad q_3=\prod_{i\in \cI}p_i^{\alpha_i}, \quad  q_4=\prod_{j\in \cJ}p_j^{\beta_j}, \quad q_5=\prod_{j\in \cJ}p_j.
\end{align}
 For any primitive character $\chi$ mod $q$ and integer $V$ we have 
\begin{align*}
&\sum_{\lambda=1}^{q}\left|\sum_{1\le v \le V}\chi(\lambda+v) \right|^{2r}\ll qV^{r} \\ & \quad \quad \quad \quad \quad + q^{1/2+o(1)}\sum_{d|q_5}\frac{1}{d^{1/2}}\sum_{v\in \cV_1}(A_1(v),q_1)^{1/2}(A_1(v),q_2)N_v(q_3q_4d).
\end{align*}
\end{lemma}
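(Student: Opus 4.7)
The plan is to expand the $2r$-th power, peel off a diagonal contribution, and on the remainder apply the Chinese remainder theorem so that the inner $\lambda$-sum factors as a product over the prime-power components of $q$, each then being handled by the appropriate estimate from Lemmas~\ref{lem:squarefree}--\ref{lem:oddprimepower}. Expanding gives
$$\sum_{v\in[1,V]^{2r}}\sum_{\lambda=1}^{q}\chi(f_{v_1}(\lambda))\overline{\chi}(f_{v_2}(\lambda)),$$
and I call $v$ degenerate when $|\{v_1,\ldots,v_{2r}\}|\le r$. There are $O(V^r)$ such tuples, each contributing at most $q$ when the $\lambda$-sum is bounded trivially, which accounts for the $qV^r$ term in the lemma. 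For every non-degenerate $v$ we have $|\{v_1,\ldots,v_{2r}\}|\ge r+1$, so Lemma~\ref{lem:squarefree} supplies some $j=j(v)$ with $A_j(v)\neq 0$; relabeling the $v_i$ (a permutation that costs an $O(1)$ factor per tuple) lets me assume $j=1$ and restrict attention to $v\in\cV_1=\{v:A_1(v)\neq 0\}$.

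Using the factorization $q = q_1\cdot q_2^2 \cdot q_3^2 \cdot q_4^2 q_5$ together with the CRT-induced factorization of the inner character sum, I bound each factor separately: Lemma~\ref{lem:squarefree} handles the $q_1$-part by $(4r)^{\tau(q_1)}q_1^{1/2}(A_1(v),q_1)^{1/2}$; Lemma~\ref{lem:evenprimepower} applied prime-by-prime to $q_2^2$, combined with Lemma~\ref{lem:square}, gives $\ll q_2\, (A_1(v),q_2)\,O(1)^{\omega(q_2)}$; Lemma~\ref{lem:evenprimepower} on $q_3^2$ gives $\le q_3 N_v(q_3)$; and for each odd-exponent prime power $p_j^{2\beta_j+1}$ in $q_4^2 q_5$, Lemma~\ref{lem:oddprimepower} (with Lemma~\ref{lem:oddevenprime} substituting when $p_j=2$, at the cost of an $O(1)$ factor) yields
$$p_j^{\beta_j+1/2}\bigl(N_v(p_j^{\beta_j})+p_j^{-1/2}N_v(p_j^{\beta_j+1})\bigr).$$
Multiplying this across $j\in\cJ$ and expanding the product over divisors $d\mid q_5$, where $p_j\mid d$ encodes the choice of the second term for the prime $p_j$, produces
$$q_4 q_5^{1/2}\sum_{d\mid q_5} d^{-1/2}\prod_{j\in\cJ} N_v\bigl(p_j^{\beta_j+[p_j\mid d]}\bigr).$$
Gluing the factors by Lemma~\ref{lem:Nmult} gives $q_4 q_5^{1/2}\sum_{d\mid q_5} d^{-1/2} N_v(q_4 d)$, and merging with the $q_3$-part (again by Lemma~\ref{lem:Nmult}) gives $q_3q_4 q_5^{1/2}\sum_{d\mid q_5} d^{-1/2} N_v(q_3 q_4 d)$.

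Finally I take the product of the four per-factor estimates, note that $q_1^{1/2}q_2q_3q_4q_5^{1/2}=q^{1/2}$ and that $(4r)^{\tau(q_1)}$, $O(1)^{\omega(q_2)}$, and the relabeling constants all get absorbed into $q^{o(1)}$, and then sum over $v\in\cV_1$; combining this with the degenerate contribution $O(qV^r)$ yields the claimed bound. I expect the main place where care is needed to be the third step above: the two-term bound from Lemma~\ref{lem:oddprimepower} at each prime in $\cJ$ must be combined across all such primes into one clean sum over $d\mid q_5$ with precisely the factor $d^{-1/2}N_v(q_4d)$. No new estimate beyond those stated is required, but matching the Burgess normalizations so that the expected powers of each small prime $p_j$ appear on each side (and do not leak out a spurious factor) is the step most easily mishandled.
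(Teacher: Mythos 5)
Your proposal is correct and follows essentially the same route as the paper: expand the $2r$-th power, bound the $O(V^r)$ degenerate tuples trivially, reduce to $A_1(v)\neq 0$ by symmetry, factor the complete $\lambda$-sum via CRT, apply Lemmas~\ref{lem:squarefree}, \ref{lem:square}, \ref{lem:evenprimepower}, \ref{lem:oddevenprime}, \ref{lem:oddprimepower} factor-by-factor, and expand the two-term bounds at the primes of $\cJ$ into the sum over $d\mid q_5$ with the factor $d^{-1/2}N_v(q_3q_4d)$ glued by Lemma~\ref{lem:Nmult}. The only cosmetic difference is that you handle $p=2$ and the relabeling constants slightly more explicitly than the paper does.
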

\begin{proof}
Let 
$$S=\sum_{\lambda=1}^{q}\left|\sum_{1\le v \le V}\chi(\lambda+v) \right|^{2r}.$$
Expanding the $2r$-th power and interchanging summation, we have 
\begin{align*}
S\le \sum_{1\le v_1,\dots,v_{2r}\le V}\left|\sum_{\lambda=1}^{q}\chi\left(\frac{(\lambda+v_1)\dots (\lambda+v_{r})}{(\lambda+v_{r+1})\dots (\lambda+v_{2r})} \right) \right|,
\end{align*}
and with notation as above, this simplifies to 
\begin{align*}
S\le \sum_{1\le v \le V}\left|\sum_{\lambda=1}^{q}\chi(f_{v_1}(\lambda))\overline \chi (f_{v_2}(\lambda)) \right|.
\end{align*}
By the Chinese remainder theorem we may factorize 
\begin{align*}
\chi=\chi_1\prod_{k\in \cK}\chi_k \prod_{i\in \cI}\chi_i \prod_{j\in \cJ}\chi_j,
\end{align*}
where $\chi_1$ is a primitive character mod $q_1,$  $\chi_k$ is a primitive character mod $p_k^2$, $\chi_i$ is a primitive character mod $p_i^{2\alpha_i}$ and $\chi_j$ is a primitive character mod $p_j^{2\beta_j+1}.$ A second application of the Chinese remainder theorem to summation over $\lambda$ gives the  decomposition 
\begin{align*}
S\le \sum_{1\le v \le V}\sigma_1(v)\prod_{k\in \cK}\sigma_k(v)\prod_{i\in \cI}\sigma_i(v)\prod_{j\in \cJ}\sigma_j(v),
\end{align*}
where 
\begin{align*}
\sigma_1(v)&=\left|\sum_{\lambda=1}^{q_1}\chi_1(f_{v_1}(\lambda))\overline \chi_1 (f_{v_2}(\lambda)) \right|, \\
\sigma_k(v)&=\left|\sum_{\lambda=1}^{p_k^{2}}\chi_k(f_{v_1}(\lambda))\overline \chi_k (f_{v_2}(\lambda)) \right|, \\
\sigma_i(v)&=\left|\sum_{\lambda=1}^{p_i^{2\alpha_i}}\chi_i(f_{v_1}(\lambda))\overline \chi_i (f_{v_2}(\lambda)) \right|, \\
\sigma_j(v)&=\left|\sum_{\lambda=1}^{p_j^{2\beta_j+1}}\chi_j(f_{v_1}(\lambda))\overline \chi_j (f_{v_2}(\lambda)) \right|.
\end{align*}

We partition the outer summation over $v$ into sets 
\begin{align*}
\cV_{\ell}&=\{ 1\le v \le V \ : \ |\{v_1,\dots,v_{2r}\}|\ge r+1, \ \  A_{\ell}(v)\neq 0 \ \}, \quad 1\le \ell \le 2r, \\
\cV'&=\{ 1\le v \le V \ : \ |\{v_1,\dots,v_{2r}\}|\le  r\},
\end{align*}
and note $|\cV'|\ll V^r.$ Using that 
\begin{align*}
\{ (v_1,\dots,v_{2r}) \ : \ 1\le v_i\le V\} \subseteq \bigcup_{\ell=1}^{2r}\cV_{\ell}\cup \cV',
\end{align*}
and estimating terms $\sigma$ for $v\in \cV_2$ trivially gives 
\begin{align}
\label{eq:SS1}
S&\ll qV^{r}+\sum_{\ell=1}^{2r}\sum_{v\in \cV_{\ell}}\sigma_1(v)\prod_{k\in \cK}\sigma_k(v)\prod_{i\in \cI}\sigma_i(v)\prod_{j\in \cJ}\sigma_j(v)\ll qV^{r}+S_1,
\end{align}
 where 
\begin{align}
\label{eq:S1sigma}
S_1=\sum_{v\in \cV_{1}}\sigma_1(v)\prod_{k\in \cK}\sigma_k(v)\prod_{i\in \cI}\sigma_i(v)\prod_{j\in \cJ}\sigma_j(v),
\end{align}
and we have used symmetry to estimate
\begin{align*}
S_{\ell}\ll S_1.
\end{align*}
For $v\in \cV_1$, $k\in \cK, i\in \cI$ and $j\in \cJ$,  by Lemmas~\ref{lem:evenprimepower}~\ref{lem:oddevenprime},~\ref{lem:oddprimepower},~\ref{lem:squarefree} and~\ref{lem:square}
\begin{align*}
\sigma_1(v)&\ll q_1^{1/2+o(1)}(A_1(v),q_1)^{1/2}, \\
\sigma_k(v) &\ll p_kN_v(p_k) \ll p_k(A_1(v),p_k), \\
\sigma_i(v) &\ll p_i^{\alpha_i}N_v(p_i^{\alpha_i}), \\
\sigma_j(v) &\ll p_j^{\beta_j+1/2}N_v(p_j^{\beta_j})+p_j^{\beta_j}N_v(p_j^{\beta_j+1}).
\end{align*}
and hence 
\begin{align*}
&\sigma_1(v)\prod_{k\in \cK}\sigma_k(v)\prod_{i\in \cI}\sigma_i(v)\prod_{j\in \cJ}\sigma_j(v) \ll \\ 
& q^{1/2+o(1)}(A_1(v),q_1)^{1/2}(A_1(v),q_2)\prod_{i\in \cI}N_v(p_i^{\alpha_i})\prod_{j\in \cJ}\left(N_v(p_j^{\beta_j})+\frac{N_v(p_j^{\beta_j+1})}{p_j^{1/2}} \right).
\end{align*}
Recalling~\eqref{eq:qsdef} and using Lemma~\ref{lem:Nmult}, we see that 
\begin{align*}
\prod_{i\in \cI}N_v(p_i^{\alpha_i})\prod_{j\in \cJ}\left(N_v(p_j^{\beta_j})+\frac{N_v(p_j^{\beta_j+1})}{p_j^{1/2}} \right)&\le N_v(q_3)\sum_{d|q_5}\frac{N_v(q_4d)}{d^{1/2}} \\
&\le \sum_{d|q_5}\frac{N_v(q_3q_4d)}{d^{1/2}},
\end{align*}
which  implies 
\begin{align*}
&\sigma_1(v)\prod_{k\in \cK}\sigma_k(v)\prod_{i\in \cI}\sigma_i(v)\prod_{j\in \cJ}\sigma_j(v) \\
&\ll q^{1/2+o(1)}(A_1(v),q_1)^{1/2}(A_1(v),q_2)\sum_{d|q_5}\frac{N_v(q_3q_4d)}{d^{1/2}}.
\end{align*}
Substituting the above into~\eqref{eq:S1sigma} we get 
\begin{align*}
S_1\ll q^{1/2+o(1)}\sum_{d|q_5}\frac{1}{d^{1/2}}\sum_{v\in \cV_1}(A_1(v),q_1)^{1/2}(A_1(v),q_2)N_v(q_3q_4d),
\end{align*}
and the result follows from~\eqref{eq:SS1}.
\end{proof}
\section{Proof of Lemma~\ref{lem:charKloos}}
By Lemma~\ref{lem:BurgessCRT} we have 
\begin{align}
\label{eq:lemKs1}
&\sum_{\lambda=1}^{q}\left|\sum_{1\le v \le V}\chi(\lambda+v) \right|^{2r}\ll qV^{r}+ q^{1/2+o(1)}\sum_{d|q_5}\frac{1}{d^{1/2}}S_d,
\end{align}
where 
\begin{align*}
S_d=\sum_{v\in \cV_1}(A_1(v),q_1)^{1/2}(A_1(v),q_2)N_v(q_3q_4d).
\end{align*}
Fix some $d|q_5$ and consider $S_d$. Recalling that 
\begin{align*}
A_1(v)=\prod_{i\neq 1}(v_1-v_i),
\end{align*}
we partition summation over $v$ into sets depending on the values of $(A_1(v),q_1)$ and $(A_1(v),q_2)$. For $d_1|q_1$ and $d_2|q_2$ we define
\begin{align*}
\cV_1(d_1,d_2)=\{ v\in \cV_1 \ : \  (A_1(v),q_1)=d_1, \ \ (A_1(v),q_2)=d_2 \},
\end{align*}
so that 
\begin{align}
\label{eq:lemKs2}
S_d=\sum_{\substack{d_1|q_1 \\ d_2|q_2 }}d_1^{1/2}d_2S_d(d_1,d_2),
\end{align}
where 
\begin{align}
\label{eq:SdN}
S_d(d_1,d_2)=\sum_{v\in \cV_1(d_1,d_2)}N_v(q_3q_4d).
\end{align}
Since $N_v$ is defined by~\eqref{eq:Nvequation} and~\eqref{eq:Nvcondition}, we may write 
\begin{align*}
N_v(q_3q_4d)=\sum_{\substack{\lambda=0 \\ (*)}}^{q_3q_4d-1}1,
\end{align*}
where $(*)$ denotes summation with conditions 
\begin{align}
\label{eq:ff'cond}
f'_{v_1}(\lambda)f_{v_2}(\lambda)-f_{v_1}(\lambda)f'_{v_2}(\lambda)\equiv 0 \mod{q_3q_4d}, \quad (f_{v_1}(\lambda)f_{v_2}(\lambda),q_3q_4d)=1.
\end{align}
Substituting into~\eqref{eq:SdN} and rearranging summation gives 
\begin{align*}
S_d(d_1,d_2)=\sum_{\lambda=0}^{q_3q_4d-1}\sum_{\substack{v\in \cV_1(d_1,d_2) \\ (*)}}1.
\end{align*}
Recalling~\eqref{eq:ffdef}, the conditions~\eqref{eq:ff'cond}
imply that
\begin{align*}
\frac{1}{\lambda+v_1}+\dots+\frac{1}{\lambda+v_r}\equiv \frac{1}{\lambda+v_{r+1}}+\dots+\frac{1}{\lambda+v_{2r}} \mod{q_3q_4d},
\end{align*}
hence defining
\begin{align*}
K_{r,q_3q_4d}(\lambda,V,d_1,d_2),
\end{align*}
to count the number of solutions to the congruence 
\begin{align*}
\frac{1}{\lambda+v_1}+\dots+\frac{1}{\lambda+v_r}\equiv \frac{1}{\lambda+v_{r+1}}+\dots+\frac{1}{\lambda+v_{2r}} \mod{q_3q_4d},
\end{align*}
with variables satisfying 
\begin{align*}
1\le v_1,\dots,v_{2r}\le V, \quad (A_1(v),q_1)=d_1, \ \ (A_1(v),q_2)=d_2,
\end{align*}
we have 
\begin{align}
\label{eq:SK123}
S_d(d_1,d_2)\le \sum_{\lambda=0}^{q_3q_4d-1}K_{r,q_3q_4d}(\lambda,V,d_1,d_2).
\end{align}
Our next step is to estimate $K_{r,q_3q_4d}(\lambda,V,d_1,d_2)$ in terms of $K_{r,q_3q_4d}(\lambda,V).$ If $(A_1(v),q_1)=d_1$  and $(A_1(v),q_2)=d_2$, then since both $q_1$ and $q_2$ are squarefree, there exists a decomposition
\begin{align*}
d_1=t_2\dots t_{2r}, \quad d_2=s_2\dots s_{2r}, \quad (t_i,t_j)=1, \quad (s_i,s_j)=1, \ \ i\neq j,
\end{align*}
such that 
\begin{align*}
v_j\equiv v_1 \mod{t_j}, \quad v_j\equiv v_1 \mod{s_j},
\end{align*}
and since $(q_1,q_2)=1$ this implies that 
\begin{align*}
v_j\equiv v_1 \mod{t_js_j},
\end{align*}
and note that in order for $A_1(v)\neq 0$ we must have $t_js_j\ll V$.
With $s_2,\dots,t_{2r}$ as above, let  $K_{r,q_3q_4d}(\lambda,V,s_2,\dots,t_{2r})$ count the number of solutions to the congruence 
\begin{align*}
\frac{1}{\lambda+v_1}+\sum_{j=2}^{r}\frac{1}{\lambda+v_1+u_jt_js_j}\equiv \sum_{j=r+1}^{2r}\frac{1}{\lambda+v_{1}+u_jt_js_j} \mod{q_3q_4d},
\end{align*}
with variables satisfying 
\begin{align*}
1\le v_1\le V, \quad |u_j|\le \frac{V}{s_jt_j},
\end{align*}
so that 
\begin{align}
\label{eq:KKs111}
K_{r,q_3q_4d}(\lambda,V)\ll \sum_{\substack{t_2\dots t_{2r}=d_1 \\ s_2\dots s_{2r}=d_2 \\ t_js_j \ll V}}K_{r,q_3q_4d}(\lambda,V,s_1,\dots,t_{2r}).
\end{align}
Fix some $s_2,\dots t_{2r}$ and consider  $K_{r,q_3q_4d}(\lambda,V,s_2,\dots,t_{2r}).$ Estimating the contribution from $v_1$ trivially, we see that there exists some $\lambda^{*}$ such that $K_{r,q_3q_4d}(\lambda,V,d_1,d_2)$ is bounded by $O(V)$ times the number of solutions to the congruence 
\begin{align*}
\frac{1}{\lambda^{*}}+\sum_{j=2}^{r}\frac{1}{\lambda^{*}+u_jt_js_j}\equiv \sum_{j=r+1}^{2r}\frac{1}{\lambda^{*}+u_jt_js_j} \mod{q_3q_4d},
\end{align*}
with variables satisfying $|u_j|\le V/s_jt_j.$ Detecting via additive characters and using H\"{o}lder's inequality, we get 
\begin{align*}
& K_{r,q_3q_4d}(\lambda,V,s_2,\dots,t_{2r})\ll \frac{V}{q_3q_4d}\sum_{y=1}^{q_3q_4d}\prod_{j=2}^{2r}\left|\sum_{|u_j|\le V/t_js_j}e_{q_3q_4d}(y(\lambda^{*}+t_js_ju_j)^{-1}) \right| \\
& \quad \quad \quad \quad \ll V\prod_{j=2}^{2r}\left(\frac{1}{q_3q_4d}\sum_{y=1}^{q_3q_4d}\left|\sum_{|u_j|\le V/t_js_j}e_{q_3q_4d}(y(\lambda^{*}+t_js_ju_j)^{-1}) \right|^{2r}\right)^{1/2r}.
\end{align*}
Hence with $K_{r,q}(\lambda,V)$ defined as in~\eqref{eq:Kdefeqn} and~\eqref{eq:Kdefcond} we have 
\begin{align*}
K_{r,q_3q_4d}(\lambda,V,s_2,\dots,t_{2r})\ll V\prod_{j=2}^{2r}\max_{\lambda}K_{r,q_3q_4d}(\lambda,V/t_js_j)^{1/2r}.
\end{align*}
Substituting the above into~\eqref{eq:KKs111} gives
\begin{align*}
K_{r,q_3q_4d}(\lambda,V)\ll V\sum_{\substack{t_2\dots t_{2r}=d_1 \\ s_2\dots s_{2r}=d_2 \\ t_js_j \ll V}}\prod_{j=2}^{2r}\max_{\lambda}K_{r,q_3q_4d}(\lambda,V/t_js_j)^{1/2r},
\end{align*}
and hence by~\eqref{eq:SK123}  
\begin{align*}
S_d(d_1,d_2)\ll Vq_3q_4d\sum_{\substack{t_2\dots t_{2r}=d_1 \\ s_2\dots s_{2r}=d_2}}\prod_{j=2}^{2r}\max_{\lambda}K_{r,q_3q_4d}(\lambda,V/t_js_j)^{1/2r}.
\end{align*}
Combining the above with~\eqref{eq:lemKs1} and~\eqref{eq:lemKs2} gives
\begin{align*}
&\sum_{\lambda=1}^{q}\left|\sum_{1\le v \le V}\chi(\lambda+v) \right|^{2r}\ll qV^{r} \\ &+ q^{1/2+o(1)}q_3q_4V\sum_{\substack{d|q_5 \\\ d_1|q_1 \\ d_2|q_2}}d_1^{1/2}d^{1/2}d_2\sum_{\substack{t_2\dots t_{2r}=d_1 \\ s_2\dots s_{2r}=d_2 \\ t_js_j\ll V}}\prod_{j=2}^{2r}\max_{\lambda}K_{r,q_3q_4d}(\lambda,V/t_js_j)^{1/2r},
\end{align*}
and the result follows after rearranging summation.
\section{Equations with Kloosterman fractions}
\label{sec:3}
In this section we estimate $K_{r,q}(\lambda,V)$ for arbitrary integer $q$.
\begin{lemma}
\label{lem:eqnKloosterman}
Let  $K_{r,q}(\lambda,V)$ be defined by~\eqref{eq:Kdefeqn} and~\eqref{eq:Kdefcond}. For any integer $q$,  if 
$$V\ll q^{1/4k(k-1)},$$
then we have 
\begin{align*}
K_{r,q}(\lambda,V)\ll V^{r}q^{o(1)}.
\end{align*}
\end{lemma}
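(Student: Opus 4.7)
The plan is to follow the Bourgain--Garaev--Konyagin--Shparlinski strategy sketched in the introduction, adapting the resultant step so that it remains valid over composite modulus. Writing $g_v(X) = \prod_{i=1}^r (X+v_i)$ and $h_v(X) = \prod_{i=r+1}^{2r}(X+v_i)$, logarithmic differentiation identifies the congruence~\eqref{eq:Kdefeqn} with $P_v(\lambda) \equiv 0 \pmod q$, where
\begin{align*}
P_v(X) = g_v'(X) h_v(X) - g_v(X) h_v'(X)
\end{align*}
is a polynomial of degree at most $2r-2$ with coefficients polynomially bounded in $V$. Solutions with $P_v \equiv 0$ identically (the \emph{diagonal} ones, corresponding to $\{v_1,\dots,v_r\}=\{v_{r+1},\dots,v_{2r}\}$ as multisets) contribute $O(V^r)$ to $K_{r,q}(\lambda,V)$, so only the non-diagonal tuples need to be controlled. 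As a preliminary step I would reduce to the case $(\lambda,q)=1$: since any interval of length $q^{o(1)}$ contains an integer coprime to $q$, one can translate $\lambda \mapsto \lambda + c$ with $|c| \ll q^{o(1)}$ and simultaneously shift $v_i \mapsto v_i - c$, preserving the congruence at a cost absorbed in the $q^{o(1)}$ factor of the target bound.

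Next I would apply geometry of numbers. The lattice
\begin{align*}
\mathcal{L} = \{(a_0,\dots,a_{2r-2}) \in \Z^{2r-1} : a_0 + a_1\lambda + \dots + a_{2r-2}\lambda^{2r-2} \equiv 0 \pmod q\}
\end{align*}
has covolume $q$ under $(\lambda,q)=1$, so Minkowski's theorem produces a nonzero integer polynomial $Q$ of degree at most $2r-2$ with $\|Q\|_\infty \ll q^{1/(2r-1)}$ and $Q(\lambda) \equiv 0 \pmod q$. For any non-diagonal solution $v$, both $P_v$ and $Q$ vanish at $\lambda$ modulo $q$. The key step is then to use the Bezout-type identity expressing $\text{Res}(Q,P_v)$ as a $\Z[X]$-linear combination of $Q$ and $P_v$; evaluating at $X=\lambda$, a power of $\lambda$ multiplies the resultant, so the invertibility $(\lambda,q)=1$ is exactly what allows the identity to be solved for $\text{Res}(Q,P_v)$ modulo $q$, giving $\text{Res}(Q,P_v) \equiv 0 \pmod q$. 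This is where the composite-modulus adaptation of~\cite{BG} takes place.

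Hadamard's inequality applied to the Sylvester matrix then bounds $|\text{Res}(Q,P_v)|$ by a product of powers of $\|Q\|_\infty$ and $\|P_v\|_\infty$, and the hypothesis $V \ll q^{1/4r(r-1)}$ is precisely what is required to make this bound strictly less than $q$. This forces $\text{Res}(Q,P_v)=0$ as integers, so $Q$ and $P_v$ share a common root $\sigma \in \overline{\Q}$. For each of the at most $2r-2$ such roots $\sigma$ of the fixed polynomial $Q$, the number of tuples $v$ with $|v_i|\le V$ satisfying
\begin{align*}
\sum_{i=1}^r \frac{1}{\sigma + v_i} = \sum_{i=r+1}^{2r} \frac{1}{\sigma + v_i}
\end{align*}
is controlled by the divisor bound in the ring of integers of $\Q(\sigma)$, giving a contribution of $V^r q^{o(1)}$, as in~\cite[Lemma~6]{BG}. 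Summing over the roots of $Q$ yields the claimed bound.

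The main obstacle is exactly the resultant step: over a residue ring modulo composite $q$, vanishing of the resultant is not implied by the existence of a common root, so the elegant prime-modulus argument of~\cite{BG} does not transfer directly. The way around this is to push the Bezout identity itself through modulo $q$, which only works if the common root is a unit in $\Z/q\Z$, and is what forces the preliminary reduction to $(\lambda,q)=1$. A secondary subtlety is keeping the divisor-counting estimate uniform across the algebraic number fields $\Q(\sigma)$ that arise, whose degrees are bounded by $2r-2$ but depend on $Q$.
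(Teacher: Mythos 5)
Your overall architecture matches the paper's: reduce to $(\lambda,q)=1$ by shifting $\lambda$ to a nearby integer coprime to $q$, clear denominators to get $P_v(\lambda)\equiv 0 \pmod q$, force a resultant to vanish modulo $q$ (this is exactly the role of the paper's Lemma~\ref{lem:liftC}, proved via the adjugate of the Sylvester matrix rather than your Bezout identity, but to the same effect), conclude the resultant vanishes over $\Z$, and finish with the complex-root count of \cite[Lemma~6]{BG}. The gap is in your choice of auxiliary polynomial, and it is quantitative, not cosmetic. A Minkowski vector in your lattice only gives $\|Q\|_\infty\ll q^{1/(2r-1)}$, while $\|P_v\|_\infty\ll V^{2r-1}$, so Hadamard applied to the Sylvester matrix yields
\begin{align*}
|\mathrm{Res}(Q,P_v)|\ll q^{(2r-2)/(2r-1)}\,V^{(2r-1)(2r-2)}q^{o(1)},
\end{align*}
which is smaller than $q$ only when $V\ll q^{1/(2(r-1)(2r-1)^2)}$ (roughly $q^{1/8r^3}$), not in the claimed range $V\ll q^{1/4r(r-1)}$ (roughly $q^{1/4r^2}$); weighting the lattice does not rescue this, since the covolume constraint forces $Q$ to have total height about $q$ spread over $2r-1$ coefficients. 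The paper avoids constructing $Q$ at all: it fixes one non-degenerate solution $v^*$ and takes the second polynomial to be $P_{v^*}$ itself, so that \emph{both} polynomials have the structured coefficient bound $|a_i|\ll V^{i+1}$, and the Bourgain--Garaev--Konyagin--Shparlinski resultant estimate (Lemma~\ref{lem:resultantbound}) gives $\mathrm{Res}(P_{v^*},P_v)\ll V^{4r(r-1)}$, which is $<q$ precisely under the stated hypothesis. The common roots $\sigma$ are then roots of $P_{v^*}$, and the argument closes as you intend.

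A secondary gap: you only discard the fully diagonal tuples ($P_v\equiv 0$ identically). Tuples with a repeated value but unequal multisets, e.g.\ $v_1=v_{r+1}$, are not diagonal, yet for them $-v_1$ is a root of $P_v$, so the common root $\sigma$ may equal $-v_i$ and the identity $P_v(\sigma)=0$ no longer translates into the fraction equation required for \cite[Lemma~6]{BG}. The paper handles this by splitting off all tuples with $|\{v_1,\dots,v_{2r}\}|<2r$ and bounding them by induction on $r$ (fix the repeated variable, detect by additive characters, apply H\"older to reduce to $K_{r-1,q}$); the pairwise distinctness kept in the main term is also exactly what guarantees $P_v(-v_i)\neq 0$, hence $\sigma\neq -v_i$. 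You would need some such device (induction or a separate treatment of repeated entries) to make the final counting step legitimate.
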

\begin{cor}
\label{lem:eqnKloosterman1}
Let  $K_{r,q}(\lambda,V)$ be defined by~\eqref{eq:Kdefeqn} and~\eqref{eq:Kdefcond}. For arbitrary integers $q$ and $V$ we have 
\begin{align*}
K_{r,q}(\lambda,V)\ll \left(\frac{V^{2r}}{q^{1/4(r-1)}}+V^{r}\right)q^{o(1)}.
\end{align*}
\end{cor}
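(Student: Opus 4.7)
The corollary should follow from Lemma~\ref{lem:eqnKloosterman} by a dyadic partition argument combined with H\"{o}lder's inequality in Fourier space. Set $V_0 := q^{1/4r(r-1)}$, so that Lemma~\ref{lem:eqnKloosterman} applies precisely in the range $V \ll V_0$. Observe that $V_0^r = q^{1/4(r-1)}$, so the two terms $V^r$ and $V^{2r}/q^{1/4(r-1)}$ of the target bound cross over exactly at $V = V_0$: for $V \le V_0$ the first dominates, and for $V > V_0$ the second does. In the former range, Lemma~\ref{lem:eqnKloosterman} delivers the bound $K_{r,q}(\lambda,V) \ll V^r q^{o(1)}$ immediately, so the issue is the complementary range $V > V_0$.

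In that range, the plan is to expand $K_{r,q}(\lambda,V)$ via additive character orthogonality,
\begin{equation*}
K_{r,q}(\lambda,V) = \frac{1}{q}\sum_{y=1}^{q}|S(y)|^{2r}, \qquad S(y) := \sum_{\substack{|v|\le V \\ (\lambda+v,q)=1}}\eq\!\left(y\inv{\lambda+v}\right),
\end{equation*}
and partition the range $|v|\le V$ into $M \asymp V/V_0$ intervals $I_1,\dots,I_M$, each of length at most $V_0$, with centers $a_k$. Writing $S(y) = \sum_{k=1}^{M} S_k(y)$ according to which $I_k$ contains $v$, H\"{o}lder's inequality in the index $k$ gives
\begin{equation*}
|S(y)|^{2r} \le M^{2r-1}\sum_{k=1}^{M}|S_k(y)|^{2r}.
\end{equation*}
Averaging in $y$ and running orthogonality in reverse, the change of variables $v = a_k+u$, $\lambda' = \lambda+a_k$, identifies
\begin{equation*}
\frac{1}{q}\sum_{y=1}^{q}|S_k(y)|^{2r} = K_{r,q}(\lambda+a_k,\,V_0/2).
\end{equation*}
Since $V_0/2 \ll q^{1/4r(r-1)}$, Lemma~\ref{lem:eqnKloosterman} bounds each of these by $V_0^r q^{o(1)}$. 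Summing over $k$ and invoking $M \asymp V/V_0$ together with $V_0^r = q^{1/4(r-1)}$ yields
\begin{equation*}
K_{r,q}(\lambda,V) \ll M^{2r-1}\cdot M \cdot V_0^r q^{o(1)} \ll \frac{V^{2r}}{V_0^r}q^{o(1)} = \frac{V^{2r}}{q^{1/4(r-1)}}\,q^{o(1)},
\end{equation*}
which matches the claimed estimate.

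The deduction is essentially mechanical: the only thing to check is that the exponents balance correctly at $V = V_0$, which they do because $V_0^r = q^{1/4(r-1)}$. There is no genuine obstacle in this step; all the substance of the corollary is carried by Lemma~\ref{lem:eqnKloosterman}, where one must handle the lattice/resultant machinery uniformly in $\lambda$ and in composite $q$.
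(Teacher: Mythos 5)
Your proposal is correct and follows essentially the same route as the paper: partition $[-V,V]$ into intervals of length about $q^{1/4r(r-1)}$, detect the congruence with additive characters, use H\"older to reduce to the count with all variables in a single short interval (after a shift of $\lambda$), and apply Lemma~\ref{lem:eqnKloosterman}. The only cosmetic difference is that you apply H\"older in the interval index inside the Fourier expansion, whereas the paper first pigeonholes over $2r$-tuples of intervals and then uses H\"older; both yield the same factor $V^{2r}/q^{1/4(r-1)}$.
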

We first recall some basics of linear algebra. Given an $n\times n$ matrix 
$$
A=\begin{bmatrix}
    a_{1,1} & a_{1,2} & a_{1,3} & \dots  & a_{1,n} \\
    a_{2,1} & a_{2,2} & a_{2,3} & \dots  & a_{2,n} \\
    \vdots & \vdots & \vdots & \ddots & \vdots \\
    a_{n,1} & a_{n,2} & a_{n,3} & \dots  & a_{n,n}
\end{bmatrix},
$$
let $A_{i,j}$ denote the matrix obtained by deleting the $i$-th row and $j$-th column from $A$ and define the adjoint of $A$, $\text{adj}(A)$ to be the matrix with $(i,j)$-th entry $(-1)^{i+j}\text{det}(A_{j,i}).$
Then we have 
\begin{align}
\label{eq:Aadjoint}
A\times \text{adj}(A)=\text{adj}(A)\times A=\text{det}(A)\begin{bmatrix}
    1 & 0 & 0 & \dots  & 0 \\
    0 & 1 & 0 & \dots  & 0 \\
    \vdots & \vdots & \vdots & \ddots & \vdots \\
    0 & 0 & 0 & \dots  & 1
\end{bmatrix}.
\end{align}
Given two polynomials $f,g\in \Z[X]$ with coefficients 
\begin{align}
\label{eq:fgDEF}
f(X)=a_nX^n+\dots+a_0, \quad g(X)=b_mX^m+\dots+b_0,
\end{align}
we define the Sylvester matrix $S(f,g)$ of $f$ and $g$ to be $(m+n)\times (m+n)$ matrix
\begin{align*}
S(f,g)=\begin{bmatrix}
    a_n & a_{n-1} & a_{n-2} & \dots  & 0 & 0 & 0 \\
    0 & a_n & a_{n-1} & \dots  & 0 & 0 & 0  \\
    \vdots & \vdots & \vdots & \ddots & \vdots & \vdots & \vdots \\
    0 & 0 & 0 & \dots &  a_1 & a_0  & 0 \\ 
    0 & 0 & 0 & \dots &  a_2 & a_1  & a_0 \\ 
b_m & b_{m-1} & b_{m-2} & \dots  & 0 & 0 & 0 \\
    0 & b_m & b_{m-1} & \dots  & 0 & 0 & 0  \\
    \vdots & \vdots & \vdots & \ddots & \vdots & \vdots & \vdots \\
    0 & 0 & 0 & \dots &  b_1 & b_0  & 0 \\ 
    0 & 0 & 0 & \dots &  b_2 & b_1  & b_0
\end{bmatrix},
\end{align*}
and define the resultant of $f$ and $g$ by 
\begin{align}
\label{eq:Resdef}
\text{Res}(f,g)=\text{det}(S(f,g)).
\end{align}
We recall that $\text{Res}(f,g)=0$ if and only if $f$ and $g$ have a common root over $\C$.
The following result will be needed to extend the techniques of~\cite{BG,BGKS} from prime to composite modulus.
\begin{lemma}
\label{lem:liftC}
Let $q$ and $\lambda$ be  integers with $(\lambda,q)=1$.  Suppose $f,g\in \Z[X]$ are polynomials satisfying
\begin{align}
\label{eq:FGzeromodq}
f(\lambda)\equiv g(\lambda)\equiv 0 \mod{q}.
\end{align}
Then we have 
\begin{align*}
\text{Res}(f,g)\equiv 0 \mod{q}.
\end{align*}
\end{lemma}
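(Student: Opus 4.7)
The plan is to exploit the Sylvester matrix description of the resultant from~\eqref{eq:Resdef} together with the adjoint identity~\eqref{eq:Aadjoint}. The key observation is that a simultaneous root of $f$ and $g$ modulo $q$ produces an explicit vector that $S(f,g)$ annihilates modulo $q$, and the adjoint identity then transports this vanishing down to the resultant itself.

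With $f,g$ as in~\eqref{eq:fgDEF} of respective degrees $n$ and $m$, I would form the column vector
$$w=(\lambda^{m+n-1},\lambda^{m+n-2},\dots,\lambda,1)^{T}.$$
A direct inspection of the displayed form of $S(f,g)$ shows that its $i$-th row for $1\le i\le m$ is the coefficient vector of $X^{m-i}f(X)$, viewed as a polynomial of degree at most $m+n-1$, while its $(m+j)$-th row for $1\le j\le n$ is the coefficient vector of $X^{n-j}g(X)$. Pairing with $w$ therefore evaluates these polynomials at $X=\lambda$, so the entries of $S(f,g)\,w$ are either $\lambda^{m-i}f(\lambda)$ or $\lambda^{n-j}g(\lambda)$ depending on which block one is in. Under the hypothesis $f(\lambda)\equiv g(\lambda)\equiv 0 \mod{q}$, every entry of $S(f,g)\,w$ vanishes modulo $q$.

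Multiplying on the left by the integer matrix $\text{adj}(S(f,g))$ and invoking~\eqref{eq:Aadjoint} converts this into $\text{Res}(f,g)\cdot w\equiv\mathbf{0}\mod{q}$. Reading off the bottom coordinate of $w$, which equals $1$, yields $\text{Res}(f,g)\equiv 0 \mod{q}$, as required.

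There is no serious obstacle: the whole argument is a Bezout-type identity for the resultant, expressed through the adjoint, and it passes cleanly to residue rings because $\text{adj}(S(f,g))$ has integer entries. I note that the coprimality hypothesis $(\lambda,q)=1$ is not actually required by the proof I have sketched, since the bottom entry of $w$ is already the unit $1$; presumably it is recorded in the statement because it is imposed at the point where Lemma~\ref{lem:liftC} is invoked, after the reduction to $\lambda$ coprime to the modulus that is outlined in the introduction.
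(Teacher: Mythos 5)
Your proof is correct and is essentially the paper's own argument: both multiply the Sylvester matrix by the vector of powers of $\lambda$, note that every entry of the product is of the form $\lambda^{k}f(\lambda)$ or $\lambda^{k}g(\lambda)\equiv 0 \pmod{q}$, and then apply the adjoint identity~\eqref{eq:Aadjoint} together with~\eqref{eq:Resdef} to conclude $\text{Res}(f,g)\equiv 0 \pmod{q}$. Your side remark is also accurate: the hypothesis $(\lambda,q)=1$ is not actually needed for this step (the paper cites it, but the unit coordinate of the power vector already gives the conclusion); it is only relevant where the lemma is applied after the reduction to $\lambda$ coprime to $q$.
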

\begin{proof}
We may suppose $\text{Res}(f,g)\neq 0$ as otherwise the result is immediate. Let $f$ and $g$ have coefficients given by~\eqref{eq:fgDEF} and define 
$$\tilde \lambda = \begin{bmatrix}
    1  \\
    \lambda  \\
    \vdots \\
    \lambda^{m+n}
\end{bmatrix}.$$
The condition $(\lambda,q)=1$ and~\eqref{eq:FGzeromodq} imply that 
\begin{align*}
S(f,g)\tilde \lambda \equiv  \begin{bmatrix}
    0  \\
    0  \\
    \vdots \\
    0
\end{bmatrix} \mod{q},
\end{align*}
and hence by~\eqref{eq:Aadjoint} and~\eqref{eq:Resdef}
\begin{align*}
\text{Res}(f,g)\begin{bmatrix}
    1 & 0 & 0 & \dots  & 0 \\
    0 & 1 & 0 & \dots  & 0 \\
    \vdots & \vdots & \vdots & \ddots & \vdots \\
    0 & 0 & 0 & \dots  & 1
\end{bmatrix}\tilde \lambda\equiv 0 \mod{q},
\end{align*}
which implies $\text{Res}(f,g)\equiv 0 \mod{q}.$
\end{proof}
We will require the following resultant estimate of Bourgain, Garaev, Konyagin and Shparlinski~\cite[Corollary~3]{BGKS}.
\begin{lemma}
\label{lem:resultantbound}
Let $P_1(X)$ and $P_2(X)$ be nonconstant polynomials
\begin{align*}
P_1(X)=\sum_{i=0}^{M-1}a_iX^{M-1-i}, \quad P_2(X)=\sum_{i=0}^{N-1}b_iX^{N-1-i},
\end{align*}
such that 
\begin{align*}
|a_i|<H^{i+\sigma}, \quad |b_i|<H^{i+\theta}.
\end{align*}
Then we have 
\begin{align*}
\text{Res}(P_1,P_2)\ll H^{(M-1+\sigma)(N-1+\theta)-\theta\sigma}.
\end{align*}
\end{lemma}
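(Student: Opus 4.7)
The plan is to exploit a single rescaling trick: the coefficient bound $|a_i|<H^{i+\sigma}$ is precisely the one that gets balanced by substituting $X=HY$, and similarly for $P_2$. After this substitution the Sylvester matrix has entries of comparable size, so the crude Hadamard inequality already gives the claimed bound. The proof thus splits into (i) tracking how the resultant scales under $X\mapsto HY$, (ii) bounding the rescaled coefficients, (iii) applying Hadamard to the Sylvester determinant, and (iv) collecting exponents.

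For step (i), write $\widetilde P_1(Y)=P_1(HY)$ and $\widetilde P_2(Y)=P_2(HY)$. Using $\mathrm{Res}(f,g)=a^{\deg g}b^{\deg f}\prod_{i,j}(\alpha_i-\beta_j)$, where $a,b$ are leading coefficients and $\alpha_i,\beta_j$ the roots, one immediately verifies
\[
\mathrm{Res}(\widetilde P_1,\widetilde P_2)=H^{(M-1)(N-1)}\,\mathrm{Res}(P_1,P_2),
\]
since the leading coefficients pick up $H^{M-1}$ and $H^{N-1}$ while each of the $(M-1)(N-1)$ root differences is divided by $H$. For step (ii), the coefficient of $Y^{M-1-i}$ in $\widetilde P_1$ is $a_iH^{M-1-i}$, whose absolute value is strictly less than $H^{i+\sigma}\cdot H^{M-1-i}=H^{M-1+\sigma}$; analogously every coefficient of $\widetilde P_2$ is bounded by $H^{N-1+\theta}$. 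Thus after rescaling we have \emph{uniform} coefficient bounds, which is exactly the situation Hadamard is designed for.

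For step (iii), the Sylvester matrix $S(\widetilde P_1,\widetilde P_2)$ has size $(M+N-2)\times(M+N-2)$, with $N-1$ rows each being a shift of the coefficient vector of $\widetilde P_1$ (at most $M$ nonzero entries, each bounded by $H^{M-1+\sigma}$), and $M-1$ rows each being a shift of the coefficient vector of $\widetilde P_2$ (at most $N$ nonzero entries, each bounded by $H^{N-1+\theta}$). Hadamard's inequality applied row-by-row yields
\[
|\mathrm{Res}(\widetilde P_1,\widetilde P_2)|\le\bigl(\sqrt{M}\,H^{M-1+\sigma}\bigr)^{N-1}\bigl(\sqrt{N}\,H^{N-1+\theta}\bigr)^{M-1}\ll H^{(M-1+\sigma)(N-1)+(N-1+\theta)(M-1)}.
\]

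For step (iv), dividing by $H^{(M-1)(N-1)}$ gives
\[
|\mathrm{Res}(P_1,P_2)|\ll H^{(M-1)(N-1)+\sigma(N-1)+\theta(M-1)},
\]
and a one-line expansion shows the exponent equals $(M-1+\sigma)(N-1+\theta)-\sigma\theta$, which is the target. There is really no substantive obstacle here; the only thing to be careful about is the bookkeeping for how the resultant transforms under $X\mapsto HY$, since getting the factor $H^{(M-1)(N-1)}$ wrong would corrupt the final exponent. The implicit constant in $\ll$ depends on $M,N$ only through the $\sqrt{M}^{N-1}\sqrt{N}^{M-1}$ factor from Hadamard, which is harmless for fixed degrees.
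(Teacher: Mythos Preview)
Your proof is correct. The paper does not supply its own proof of this lemma but quotes it as \cite[Corollary~3]{BGKS}; the rescaling-plus-Hadamard argument you give is essentially the argument in that reference, so there is nothing to contrast.

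One minor remark on rigour: your justification of the scaling identity $\text{Res}(\widetilde P_1,\widetilde P_2)=H^{(M-1)(N-1)}\text{Res}(P_1,P_2)$ via the product-over-roots formula tacitly assumes $a_0\neq 0$ and $b_0\neq 0$. The lemma as stated does not exclude vanishing leading coefficients (the resultant here is the Sylvester determinant with \emph{formal} degrees $M-1$ and $N-1$). The identity still holds in that generality, either as a polynomial identity in the coefficients (it holds on the Zariski-dense set where $a_0b_0\neq 0$), or by a direct row/column scaling of the Sylvester matrix; it would be worth adding one sentence to that effect.
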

The following is due to Bourgain and Garaev~\cite[Lemma~6]{BG}.
\begin{lemma}
\label{lem:BG}
For any fixed positive integer $r$ and all values of $\sigma \in \C$ the number of solutions to the equation
\begin{align*}
\frac{1}{\sigma+v_1}+\dots+\frac{1}{\sigma+v_r}=\frac{1}{\sigma+v_{r+1}}+\dots+\frac{1}{\sigma+v_{2r}},
\end{align*}
with variables satisfying
\begin{align*}
|x_1|,\dots,|x_{2r}|\le V,
\end{align*}
is bounded by $V^{r+o(1)}.$
\end{lemma}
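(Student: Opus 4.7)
The plan is to induct on $r$. The base case $r=1$ is immediate since the equation reduces to $v_1=v_2$, giving at most $2V+1$ solutions. For the inductive step, I would split solutions into two cases. In Case A (``matched pair''), some $v_i$ with $i\le r$ equals some $v_j$ with $j>r$; by symmetry, say $v_r=v_{2r}$. Cancelling the common term reduces the equation to the $(r-1)$-fold instance in the remaining $2(r-1)$ variables, which by the inductive hypothesis has at most $V^{(r-1)+o(1)}$ solutions. Summing over the $O(V)$ choices for the matched value and the $O(r^2)$ choices of matching indices yields $V^{r+o(1)}$ in Case A.

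In Case B (disjoint multisets) I clear denominators to rewrite the equation as $Q(\sigma)=0$, where $Q(X):=P_1'(X)P_2(X)-P_1(X)P_2'(X)\in\Z[X]$ and $P_1,P_2$ are the monic degree-$r$ polynomials with roots $-v_1,\ldots,-v_r$ and $-v_{r+1},\ldots,-v_{2r}$ respectively. A direct computation gives $\deg Q\le 2r-2$, and $Q\equiv 0$ iff $P_1=P_2$, which is excluded in Case B; hence $Q$ is a nonzero integer polynomial of bounded degree. If $\sigma$ is transcendental over $\Q$, or algebraic of degree exceeding $2r-2$, the relation $Q(\sigma)=0$ forces $Q\equiv 0$, a contradiction, and Case B contributes no solutions. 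For $\sigma$ algebraic of degree $d\le 2r-2$ over $\Q$, divisibility of $Q$ by the minimal polynomial of $\sigma$ produces $d$ polynomial constraints on the elementary symmetric functions of $(v_1,\ldots,v_{2r})$.

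The main obstacle lies in making this final algebraic subcase sharp enough. A naive dimension count from $d$ constraints on $2r$ variables gives $V^{2r-d}$ integer solutions on the constraint variety, which is too weak when $d$ is small --- in particular when $\sigma\in\Q$ where $d=1$ and the bound degenerates to $V^{2r-1}$. To reach $V^{r+o(1)}$ I would exploit the Wronskian-like structure of $Q$ together with the multiplicative/divisor structure of $\sum 1/(\sigma+v_i)$: for rational $\sigma$, recast the equation (after integer rescaling) as $\sum 1/u_i=\sum 1/u_{i+r}$ over integers, then apply Vieta-based divisor estimates generalizing the identity $((v_1+v_2)v_3-v_1v_2)((v_1+v_2)v_4-v_1v_2)=(v_1v_2)^2$ used in the $r=2$ case, so that each level of the induction saves a factor of $V$ rather than $V^{o(1)}$. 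Handling this divisor-sum step --- and, more delicately, the intermediate regime of algebraic $\sigma$ with $d$ small relative to $r$ --- is the technical heart of the argument.
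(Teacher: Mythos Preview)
The paper does not prove this lemma; it simply quotes it as~\cite[Lemma~6]{BG}. So there is no in-paper argument to compare against, only the Bourgain--Garaev original, which (as the introduction here indicates) proceeds by reducing to divisor-counting in a ring of algebraic integers containing $\sigma$.

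Your proposal is not a proof but an outline with an explicitly acknowledged hole. The reduction in Case~A is fine, and the observation that $Q=P_1'P_2-P_1P_2'$ vanishes identically iff $P_1=P_2$ is correct, so transcendental $\sigma$ and algebraic $\sigma$ of degree $>2r-2$ are indeed disposed of. But for algebraic $\sigma$ of small degree---in particular $\sigma\in\Q$---you have only a sketch. The passage ``apply Vieta-based divisor estimates generalizing the identity \ldots\ so that each level of the induction saves a factor of $V$'' is precisely the content of the lemma, and you have not supplied it. A dimension count of $V^{2r-d}$ from $d$ coefficient constraints is, as you note, hopeless for $d=1$; the saving has to come from arithmetic, not geometry, and the mechanism you gesture at (``divisor-sum step'') is the entire difficulty. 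Moreover, for rational $\sigma=p/q$ your rescaling puts the $u_i$ in an arithmetic progression of modulus $q$, not a full interval, so even the $\sigma=0$ case of~\eqref{eq:BGkl} (itself nontrivial) does not apply directly.

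In short: the decomposition into Cases~A and~B and the handling of transcendental/high-degree $\sigma$ are correct and match the natural shape of the Bourgain--Garaev argument, but the substantive case---bounding solutions when $\sigma$ is algebraic of low degree---remains unproved in your write-up. That case is where the work lies, and it requires the algebraic-integer divisor bounds of~\cite{BG} (or an equivalent device) rather than a soft polynomial-constraint count.
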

The following is a well known consequence of the sieve of Eratosthenes.
\begin{lemma}
\label{lem:coprimesieve}
For any integers $M,N$ and $q$ we have 
\begin{align*}
\sum_{\substack{M<n\le M+N \\ (n,q)=1}}1=\frac{\phi(q)}{q}N+O(2^{\omega(q)}).
\end{align*}
\end{lemma}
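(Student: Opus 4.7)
The plan is to apply M\"{o}bius inversion to detect the coprimality condition, which is the standard sieve of Eratosthenes approach. First I would write
\begin{align*}
\sum_{\substack{M<n\le M+N \\ (n,q)=1}}1 = \sum_{M<n\le M+N}\sum_{d\mid (n,q)}\mu(d),
\end{align*}
using the identity $\sum_{d\mid k}\mu(d)=[k=1]$. Interchanging the order of summation, the right-hand side becomes
\begin{align*}
\sum_{d\mid q}\mu(d)\sum_{\substack{M<n\le M+N \\ d\mid n}}1.
\end{align*}

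Next I would evaluate the inner sum. For any divisor $d$ of $q$, the number of multiples of $d$ in the interval $(M,M+N]$ equals $N/d+O(1)$, the error being uniform in $M$ and $d$. Substituting this gives
\begin{align*}
\sum_{d\mid q}\mu(d)\left(\frac{N}{d}+O(1)\right) = N\sum_{d\mid q}\frac{\mu(d)}{d}+O\Bigl(\sum_{d\mid q}|\mu(d)|\Bigr).
\end{align*}
The main term is identified via the standard Euler product identity $\sum_{d\mid q}\mu(d)/d=\prod_{p\mid q}(1-1/p)=\phi(q)/q$. For the error term, since $\mu(d)$ vanishes unless $d$ is squarefree, one has $\sum_{d\mid q}|\mu(d)|=\prod_{p\mid q}2=2^{\omega(q)}$.

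Assembling these pieces yields the claimed equality. There is no genuine obstacle here: the only point deserving care is the uniformity of the $O(1)$ error in the counting of multiples of $d$ in a shifted interval, which follows from the fact that $\#\{n\in(M,M+N]:d\mid n\}=\lfloor (M+N)/d\rfloor-\lfloor M/d\rfloor$ differs from $N/d$ by less than $1$.
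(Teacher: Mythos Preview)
Your proof is correct and is precisely the standard M\"{o}bius-inversion argument that constitutes the sieve of Eratosthenes; the paper itself does not spell out a proof but simply records the lemma as ``a well known consequence of the sieve of Eratosthenes,'' so your writeup matches the intended approach.
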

The following is a consequence of Lemma~\ref{lem:coprimesieve} and standard estimates for arithmetic functions.
\begin{cor}
\label{cor:coprimeshort}
Let $\varepsilon>0$ be an arbitrary positive number and $q$ an integer. Then any interval $\cI$ of length $|\cI|\gg q^{\varepsilon}$ contains an integer coprime to $q$.
\end{cor}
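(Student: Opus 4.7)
The plan is to apply Lemma~\ref{lem:coprimesieve} directly to $\cI$, writing $|\cI| = N$ with $M$ chosen so that $\cI = (M, M+N]$, and then to show that the main term dominates the error term once $N \gg q^\varepsilon$.

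The first step is to invoke Lemma~\ref{lem:coprimesieve} to obtain
\begin{align*}
\sum_{\substack{n\in \cI \\ (n,q)=1}}1 = \frac{\phi(q)}{q}N + O(2^{\omega(q)}).
\end{align*}
The second step is to bound the two arithmetic functions that appear. For the main term I would use the classical Mertens-type estimate
$$\frac{\phi(q)}{q}\gg \frac{1}{\log\log(q+2)}\gg q^{-o(1)},$$
and for the error term I would use that $2^{\omega(q)}\le \tau(q) \ll q^{o(1)}$, which is the standard divisor bound. Substituting gives
\begin{align*}
\sum_{\substack{n\in \cI \\ (n,q)=1}}1 \gg N q^{-o(1)} + O(q^{o(1)}).
\end{align*}

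The third step is to compare the two terms. If $N\ge C q^{\varepsilon}$ for a sufficiently large constant $C$ (depending on $\varepsilon$), then $Nq^{-o(1)} \ge q^{\varepsilon/2}$ for all large enough $q$, which is much larger than $q^{o(1)}$. Consequently the count of integers in $\cI$ coprime to $q$ is strictly positive, so such an integer exists. For the finitely many small $q$ not covered by the asymptotic absorption, the result is trivial since $\cI$ contains any integer once its length is at least $1$.

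I do not anticipate a serious obstacle here: the only mild care needed is to make the implicit $o(1)$ in $2^{\omega(q)}\ll q^{o(1)}$ compatible with the fixed $\varepsilon>0$, which is immediate since $o(1)<\varepsilon$ eventually. The whole argument is a one-line consequence of Lemma~\ref{lem:coprimesieve} combined with standard bounds on $\phi(q)/q$ and $\omega(q)$.
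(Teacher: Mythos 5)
Your proof is correct and is exactly the argument the paper intends: apply Lemma~\ref{lem:coprimesieve} and use the standard estimates $\phi(q)/q\gg 1/\log\log q$ and $2^{\omega(q)}\le \tau(q)\ll q^{o(1)}$, so the main term dominates once $|\cI|\gg q^{\varepsilon}$. One small quibble: for the finitely many small $q$ the correct trivial remark is to enlarge the implied constant so that $|\cI|\ge q$, whence $\cI$ contains a complete residue system modulo $q$ and hence an integer coprime to $q$ --- merely containing \emph{some} integer (length at least $1$) does not by itself give coprimality.
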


\section{Proof of Lemma~\ref{lem:eqnKloosterman}}
Fix some sufficiently small  $\varepsilon>0$ and suppose $V\gg q^{\varepsilon}$ as otherwise the result is trivial. By Corollary~\ref{cor:coprimeshort} there exists some $\lambda^{*}$ satisfying 
\begin{align}
\label{eq:lambdastar111}
|\lambda^{*}-\lambda|\le V, \quad (\lambda^{*},q)=1.
\end{align}
If $v_1,\dots,v_{2r}$ satisfies 
\begin{align*}
\frac{1}{\lambda+v_1}+\dots+\frac{1}{\lambda+v_r}\equiv \frac{1}{\lambda+v_{r+1}}+\dots+\frac{1}{\lambda+v_{2r}} \mod{q}, \quad |v_i|\le V,
\end{align*} 
then 
\begin{align*}
\frac{1}{\lambda^{*}+u_1}+\dots+\frac{1}{\lambda^{*}+u_r}\equiv \frac{1}{\lambda^{*}+u_{r+1}}+\dots+\frac{1}{\lambda^{*}+u_{2r}} \mod{q},
\end{align*} 
where  $$u_i=v_i+(\lambda-\lambda^{*}),$$ and hence  by~\eqref{eq:lambdastar111} $|u_i|\le 2V$ which implies that
\begin{align*}
K_{r,q}(\lambda,V)\le K_{r,q}(\lambda^{*},2V).
\end{align*}
Hence it is sufficient to show that for any $\lambda$ satisfying $(\lambda,q)=1$ and integer $V$  satisfying
\begin{align}
\label{eq:Vcond}
q^{\varepsilon}\le V\ll q^{1/4r(r-1)},
\end{align}
we have 
\begin{align}
\label{eq:Kind}
K_{r,q}(\lambda,V)\ll V^{r+o(1)}.
\end{align}
We proceed by induction on $r$ and note that the case $r=1$ is trivial. We formulate our induction hypothesis as follows.  Let $k$ be an integer such that for any $r\le k-1$  the estimate~\eqref{eq:Kind} holds for any $V$ satisfying~\eqref{eq:Vcond}. Let $V$ satisfy
\begin{align}
\label{eq:Vkcond}
V\ll q^{1/4k(k-1)},
\end{align}
and we aim to show that 
\begin{align}
\label{eq:Kcontr}
K_{k,q}(\lambda,V)\ll V^{k+o(1)}.
\end{align}
Let $K'_{k,q}(\lambda,V)$ count the number of solutions to the congruence 
\begin{align}
\label{eq:Keqn44}
\frac{1}{\lambda+v_1}+\dots+\frac{1}{\lambda+v_k}\equiv \frac{1}{\lambda+v_{k+1}}+\dots+\frac{1}{\lambda+v_{2k}} \mod{q},
\end{align} 
with variables satisfying 
\begin{align}
\label{eq:K'}
|v_i|\le V, \quad |\{v_1,\dots,v_{2k}\}|=2k,
\end{align}
and let $K''_{k,q}(\lambda,V)$ count the number of solutions to the congruence~\eqref{eq:Keqn44} with variables satisfying
\begin{align}
\label{eq:K''}
|v_i|\le V, \quad |\{v_1,\dots,v_{2k}\}|<2k,
\end{align} 
so that 
\begin{align}
\label{eq:KKK'''}
K_k(\lambda,V)\le K'_k(\lambda,V)+K''_k(\lambda,V).
\end{align}
Considering $K''$, if $(v_1,\dots,v_{2k})$ satisfy~\eqref{eq:Keqn44} and~\eqref{eq:K''} then $v_i=v_j$ for some $i\neq j$ and hence 
\begin{align}
\label{eq:K''KK}
K''_k(\lambda,V)\le \sum_{1\le i<j\le 2k}K_{i,j}(\lambda,V)\ll K_{i,j}(\lambda,V),
\end{align}
for some pair $i<j$, where $K_{i,j}(\lambda,V)$ counts the number of solutions to the congruence~\eqref{eq:Keqn44} with variables satisfying~\eqref{eq:K''} and $v_i=v_j$. Fixing $v_i$ with $O(V)$ choices, we see that there exists some sequence 
$$\varepsilon_1,\dots,\varepsilon_{2k-2}\in \{-1,1\},$$
and some integer $b$ such that 
\begin{align}
\label{eq:KijK''}
K_{i,j}(\lambda,V)\ll VK'''(\lambda,V),
\end{align}
where $K'''(\lambda,V)$ counts the number of solutions to the congruence 
\begin{align*}
\frac{\varepsilon_1}{\lambda+v_1}+\dots+\frac{\varepsilon_{2k-2}}{\lambda+v_{2k-2}}\equiv b \mod{q},
\end{align*}
with variables satisfying $|v_1|,\dots,|v_{2k-2}|\le V$. Detecting via additive characters, we have 
\begin{align*}
K''(\lambda,V)=\frac{1}{q}\sum_{y=1}^{q}\prod_{j=1}^{2k-2}\left(\sum_{|v|\le V}e_q\left(y\varepsilon_j (\lambda+v)^{-1} \right) \right)e_q(-yb),
\end{align*}
and hence by H\"{o}lder's inequality 
\begin{align*}
K''(\lambda,V)\le K_{k-1}(\lambda,V).
\end{align*}
Hence by~\eqref{eq:K''KK},~\eqref{eq:KijK''} and our induction hypothesis
\begin{align*}
K''_k(\lambda,V)\ll V^{k+o(1)}.
\end{align*}
Combining with~\eqref{eq:KKK'''} it is sufficient to show that 
\begin{align}
\label{eq:K'prrr}
K'_k(\lambda,V)\ll V^{k+o(1)},
\end{align}
and hence we may suppose that $K'_k(\lambda,V)\neq 0$. For a $2k$-tuple $v=(v_1,\dots,v_{2k})$ we define the polynomial
$$P_v(X)=\prod_{i\neq 1}(X+v_i)+\dots+\prod_{i\neq r}(X+v_i)-\prod_{i\neq r+1}(X+v_i)-\dots-\prod_{i\neq 2k}(X+v_i),$$
so that $P_v$ has degree at most $2k-2$. For each $v=(v_1,\dots,v_{2r})$ satisfying~\eqref{eq:Keqn44} we have 
\begin{align*}
P_{v}(\lambda)\equiv 0 \mod{q},
\end{align*}
and the assumption that $|\{v_1,\dots,v_{2r}\}|=2r$ implies that
\begin{align*}
P_v(-v_1)\neq 0.
\end{align*}
Since 
\begin{align*}
P_v(-v_1)\ll V^{2r-1}<q,
\end{align*}
we see that $P_v(X)$ is not a constant polynomial. Writing
\begin{align*}
P_v(X)=\sum_{i=0}^{2k-2}a_iZ^{2k-2-i},
\end{align*}
  the coefficients of $P_v(X)$ satisfy
\begin{align}
 \label{eq:Pcoefficients}
|a_i|\ll V^{i+1}.
\end{align}
Fixing one point $v^{*}=(v^{*}_1,\dots,v^{*}_{2r})$ counted by $K'_k(\lambda,V)$, for any other point $v$ we have 
\begin{align*}
P_{v^{*}}(\lambda)\equiv P_{v}(\lambda)\equiv 0 \mod{q},
\end{align*}
and hence the assumption $(\lambda,q)=1$ combined with Lemma~\ref{lem:liftC} implies that 
\begin{align}
\label{eq:Res0mod}
\text{Res}(P_{v^*},P_v)\equiv 0 \mod{q}.
\end{align}
By~\eqref{eq:Pcoefficients} and Lemma~\ref{lem:resultantbound}
\begin{align*}
\text{Res}(P_{v^*},P_v)\ll V^{4k(k-1)},
\end{align*}
and hence by~\eqref{eq:Vkcond} and~\eqref{eq:Res0mod}
\begin{align*}
\text{Res}(P_{v^*},P_v)=0,
\end{align*}
so that $P_{v^{*}}$ and $P_v$ have a common root over $\C$. Let $\sigma_1,\dots,\sigma_\ell$ denote the distinct roots of $P_{v^{*}}$ over $\C$. For any $v=(v_1,\dots,v_{2k})$ counted by $K'_k(\lambda,V)$ we have 
\begin{align*}
P_v(\sigma_j)=0,
\end{align*}
for some $1\le j \le \ell$ and note the assumption that the $v_i$'s are pairwise distinct implies that $v_i\neq \sigma_j$ for any $1\le i \le 2k$. Hence defining $J(\sigma)$ to count the number of solutions to the equation 
\begin{align*}
\frac{1}{\sigma+v_1}+\dots+\frac{1}{\sigma+v_{k}}=\frac{1}{\sigma+v_{r+1}}+\dots+\frac{1}{\sigma+v_{2k}},
\end{align*}
with variables satisfying  $|v_i|\le V$ we have
\begin{align*}
K'_k(\lambda,V)\le \sum_{j=1}^{\ell}J(\sigma_j),
\end{align*}
and hence from Lemma~\ref{lem:BG}
\begin{align*}
K'_k(\lambda,V)\ll V^{k+o(1)},
\end{align*}
which establishes~\eqref{eq:K'prrr} and completes the proof.
\section{Proof of Corollary~\ref{lem:eqnKloosterman1}}
By Lemma~\ref{lem:eqnKloosterman} we may assume 
\begin{align*}
V\gg q^{1/4r(r-1)}.
\end{align*}
We partition the interval $|v|\le V$ into disjoint intervals
\begin{align*}
[-V,V]=\bigcup_{j=1}^{K}I_j, \quad K\ll V/q^{1/4r(r-1)}, \quad |I_j|\ll q^{1/4r(r-1)},
\end{align*}
and let $K(I_{j_1},\dots,I_{j_{2r}})$ count the number of solutions to the congruence 
\begin{align*}
\frac{1}{\lambda+v_1}+\dots+\frac{1}{\lambda+v_r}\equiv \frac{1}{\lambda+v_{r+1}}+\dots+\frac{1}{\lambda+v_{2r}} \mod{q},
\end{align*}
with variables satisfying $v_i\in I_{j_i}$. By the pigeonhole principle, there exists some tuple $(j_1,\dots,j_{2r})$ such that 
\begin{align}
\label{eq:PGH}
K_r(\lambda,V)\ll \frac{V^{2r}}{q^{1/2(r-1)}}K(I_{j_1},\dots,I_{j_{2r}}).
\end{align}
Detecting via additive characters and applying H\"{o}lder's inequality, we have 
\begin{align*}
K(I_{j_1},\dots,I_{j_{2r}})&\le \frac{1}{q}\sum_{y=1}^{q}\prod_{i=1}^{2r}\left|\sum_{v\in I_{j_i}}e_q(y(\lambda+v)^{-1}) \right| \\ &\le \prod_{i=1}^{2r}\left(\frac{1}{q}\sum_{y=1}^{q}\left|\sum_{v\in I_{j_i}}e_q(y(\lambda+v)^{-1}) \right|^{2r} \right)^{1/2r},
\end{align*}
and hence by Lemma~\ref{lem:eqnKloosterman}
\begin{align*}
K(I_{j_1},\dots,I_{j_{2r}})\ll q^{1/4(r-1)+o(1)}.
\end{align*}
Combining with~\eqref{eq:PGH} we get 
\begin{align*}
K_{r}(\lambda,V)\ll \frac{V^{2r}q^{o(1)}}{q^{1/4(r-1)}},
\end{align*}
and completes the proof.
\section{Proof of Theorem~\ref{thm:main1}}
Assuming $q$ has factorization
\begin{align*}
q=q_1\prod_{k\in \cK}p_k^2\prod_{i\in \cI}p_i^{2\alpha_i}\prod_{j\in \cJ}p_j^{2\beta_j+1},
\end{align*}
for some sets of disjoint integers $\cK,\cI,\cJ,$ integers $\alpha_j\ge 2,\beta_j\ge 1$ and $q_1$ squarefree, we have 
\begin{align}
\label{eq:q0cfactorize}
s=\prod_{k\in \cK}p_k^2, \quad  c=\prod_{i\in \cI}p_i^{2\alpha_i}\prod_{j\in \cJ}p_j^{2\beta_j+1}.
\end{align}
With notation as in Lemma~\ref{lem:charKloos} 
\begin{align}
\label{eq:MVS76}
&\sum_{\lambda=1}^{q}\left|\sum_{1\le v \le V}\chi(\lambda+v) \right|^{2r}\ll qV^{r}+q^{1/2+o(1)}q_3q_4VS,
\end{align}
where 
\begin{align}
\label{eq:SSddd}
\nonumber S&=\sum_{\substack{d|q_5 \\\ t_2\dots t_{2r}|q_1 \\ s_2\dots s_{2r}|q_2 \\ t_js_j\ll V}}(t_1\dots t_{2r})^{1/2}d^{1/2}s_2\dots s_{2r}\prod_{j=2}^{2r}\max_{\lambda}K_{r,q_3q_4d}(\lambda,V/t_js_j)^{1/2r} \\ 
&=\sum_{\substack{d|q_5 \\\ t_2\dots t_{2r}|q_1 \\ s_2\dots s_{2r}|q_2 \\ t_js_j \ll V}}S(d,t_2,s_2,\dots,t_{2r},s_{2r}),
\end{align}
and 
\begin{align*}
&S(d,t_2,s_2,\dots,t_{2r},s_{2r})= \\ &  \quad \quad \quad \quad (t_2\dots t_{2r})^{1/2}d^{1/2}s_2\dots s_{2r}\prod_{j=2}^{2r}\max_{\lambda}K_{r,q_3q_4d}(\lambda,V/t_js_j)^{1/2r}.
\end{align*}
We recall that $q_2,\dots,q_5$ are given by
\begin{align*}
q_2=\prod_{k\in \cK}p_k, \quad q_3=\prod_{i\in \cI}p_i^{\alpha_i}, \quad q_4=\prod_{j\in \cJ}p_j^{\beta_j}, \quad q_5=\prod_{j\in \cJ}p_j.
\end{align*}
Fix some $d,t_2,\dots,t_{2r},s_2,\dots,s_{2r}$ satisfying 
\begin{align*}
d|q_5, \quad t_2\dots t_{2r}|q_1, \quad s_2\dots s_{2r}|q_2, \quad t_js_j\ll V,
\end{align*}
and consider $S(d,t_2,s_2,\dots,t_{2r},s_{2r})$. We partition the indicies $\{2,\dots,2r\}$ into sets
\begin{align*}
\cS_1&=\{ 2\le j \le 2r \ : \ t_js_j<V/(q_3q_4d)^{1/4r(r-1)}\}, \\ 
\cS_2&=\{ 2\le j \le 2r \ : \ V/(q_3q_4d)^{1/4r(r-1)}\le  t_js_j\ll V  \},
\end{align*}
and write 
\begin{align}
|\cS_1|=k_1, \quad |\cS_2|=k_2, \quad k_1+k_2=2r-1.
\end{align}
By Lemma~\ref{lem:eqnKloosterman1}, for any $0\le \lambda <q_3q_4d$ we have 
\begin{align}
\label{eq:Kb13}
K_{r,q_3q_4d}(\lambda,V/t_js_j)\ll \begin{cases}
q^{o(1)}(V/t_js_j)^{2r}\frac{1}{(q_3q_4d)^{1/4(r-1)}}, \quad j\in \cS_1, \\
q^{o(1)}(V/t_js_j)^{r}, \quad j\in \cS_2,
\end{cases}
\end{align}
which implies that
\begin{align*}
S(d,t_2,s_2,\dots,t_{2r},s_{2r})&\ll q^{o(1)}d^{1/2}\prod_{j\in \cS_1}\frac{t_j^{1/2}s_j}{(q_3q_4d)^{1/8r(r-1)}}\left(\frac{V}{t_js_j}\right)\prod_{j\in \cS_2}t_j^{1/2}s_j\left(\frac{V}{t_js_j}\right)^{1/2} \\
&\ll q^{o(1)}\frac{d^{1/2}}{(q_3q_4d)^{k_1/8r(r-1)}}\prod_{j\in \cS_2}s_j^{1/2}\prod_{j\in \cS_1}\frac{1}{t_j^{1/2}}V^{k_1+k_2/2} \\
&\ll q^{o(1)}q_5^{1/2}\left(\frac{V^{k_1/2}}{(q_3q_4q_5)^{k_1/8r(r-1)}}\prod_{j\in \cS_2}s_j^{1/2}\right)V^{r-1/2},
\end{align*}
using that $d|q_5$. Since each $s_j\ll V$ and $s_2\dots s_{2r}|q_2,$ we have 
\begin{align*}
\prod_{j\in \cS_2}s_j^{1/2}\ll \min \left\{V^{k_2},q_2 \right\},
\end{align*}
and hence 
\begin{align}
\label{eq:Sdb1}
S(d,t_2,s_2,\dots,t_{2r},s_{2r}) &\ll q^{o(1)}\frac{q_5^{1/2}V^{2r-1}}{(q_3q_4q_5)^{k_1/8r(r-1)}},
\end{align}
and 
\begin{align}
\label{eq:Sdb2}
S(d,t_2,s_2,\dots,t_{2r},s_{2r})\ll q^{o(1)}(q_5q_2)^{1/2}\left(\frac{V^{k_1/2}}{(q_3q_4q_5)^{k_1/8r(r-1)}}\right)V^{r-1/2}.
\end{align}
If $k_1=0$ then we use~\eqref{eq:Sdb2}, while if $k_1>0$ then we use~\eqref{eq:Sdb1}. This gives
\begin{align*}
S(d,t_2,s_2,\dots,t_{2r},s_{2r})\ll q^{o(1)}q_5^{1/2}\left(q_2^{1/2}V^{r-1/2}+\frac{V^{2r-1}}{(q_3q_4q_5)^{1/8r(r-1)}} \right),
\end{align*}
and hence from~\eqref{eq:MVS76},~\eqref{eq:SSddd}  and the estimate $d(n)=n^{o(1)}$ we get 
\begin{align*}
&\sum_{\lambda=1}^{q}\left|\sum_{1\le v \le V}\chi(\lambda+v) \right|^{2r}\ll qV^{r}
\\&+q^{1/2+o(1)}q_2^{1/2}q_3q_4q_5^{1/2}V^{r+1/2}+q^{1/2+o(1)}(q_3q_4q_5^{1/2})^{1-1/8r(r-1)}V^{2r},
\end{align*}
and the result follows since 
\begin{align*}
q_3q_4q_5^{1/2}=c^{1/2}, \quad q_2=s^{1/4}.
\end{align*}

\end{document}